\def\fullpage {
\addtolength{\topmargin}{-2 cm}
\addtolength{\oddsidemargin}{-1.5cm} \addtolength{\textwidth}{+3 cm}
\addtolength{\textheight}{+4 cm}}
\newcommand{\beq}[1]{\begin{equation}\label{eq:#1}}
\newcommand{\eeq}{\end{equation}}
\begin{document}
\newtheorem{theorem}{Theorem}
\newtheorem{corollary}{Corollary}
\newtheorem{lemma}{Lemma}
\newtheorem{proposition}{Proposition}
\newtheorem{conjecture}{Conjecture}
\newcommand\eps{\varepsilon}

\newcommand{\brac}[1]{\left(#1\right)}
\newcommand{\bfrac}[2]{\brac{\frac{#1}{#2}}}
\newcommand{\rdown}[1]{\left\lfloor#1\right\rfloor}
\newcommand{\rdup}[1]{\left\lceil#1\right\rfloor}
\newcommand{\me}{\mathrm{e}}
\newcommand{\ee}{\epsilon}
\newcommand{\ex}{\mathrm{ex}}
\newcommand{\Bin}{\mathrm{Bin}}
\newcommand{\cG}{{{\cal G}}}
\newcommand{\cK}{{\cal K}}
\newcommand{\cH}{{\cal H}}
\newcommand{\cP}{{\cal P}}
\newcommand{\cJ}{{\cal J}}
\newcommand{\cE}{{\cal E}}
\def\Q{\mathcal{Q}}
\newtheorem{definition}[theorem]{Definition}
\def\FF{\mathcal{F}}
\def\F{\mathcal{F}}
\def\eps{\varepsilon}
\newcommand{\eee}{{\mathbb E}}
\def\HH{\mathcal{H}}
\parindent=0pt

\title{Almost all cancellative triple systems are tripartite}

\author{J\'ozsef Balogh\thanks{{Department of
Mathematics, U.C. California at San Diego, 9500 Gilmann Drive, La
Jolla, Department of Mathematics; and  University of Illinois, 1409
W. Green Street, Urbana, IL 61801, USA}; e-mail:
{jobal@math.uiuc.edu;} research  supported in part   by NSF CAREER
Grant DMS-0745185 and DMS-0600303, UIUC Campus Research Board Grants
09072 and 08086, and OTKA Grant K76099.}  \quad  and \quad Dhruv
Mubayi\thanks{Department of Mathematics, Statistics, and Computer
Science, University of Illinois at Chicago, IL 60607;  email:
mubayi@math.uic.edu; research  supported in part by  NSF grant DMS
0653946.}}

\maketitle

\vspace{-0.4in}

\begin{abstract}
A triple system is cancellative if  no three of its distinct edges satisfy $A \cup B=A \cup C$.
 It is tripartite if it has a vertex partition into three parts such that every edge has exactly one point in each part.
   It is easy to see that every tripartite triple system is cancellative. We prove that almost all cancellative triple systems with
    vertex set $[n]$ are tripartite. This sharpens a  theorem of Nagle and R\"odl \cite{NR} on the number of cancellative triple
     systems.  It also extends recent work of Person and Schacht \cite{PSch} who proved a similar result
      for triple systems without the Fano configuration.

Our proof  uses the  hypergraph regularity lemma of Frankl and R\"odl \cite{FR}, and a stability
theorem for cancellative triple systems  due to Keevash and the second author \cite{KM}.
\end{abstract}

\section{Introduction}
 Let  $F$ be a fixed graph or hypergraph. Say that a (hyper)graph is $F$-free if it contains
  no copy of $F$ as a (not necessarily induced) sub(hyper)graph. Beginning with a result of
Erd\H os-Kleitman-Rothschild \cite{EKR}, there has been much work
concerning the number and structure of $F$-free graphs with vertex
set $[n]$ (see, e.g. \cite{EFR, KPR, PS1, BBS1, BBS2, BBS3, BSAM}). The
strongest of these results essentially state that for a large class of graphs $F$, most
of the $F$-free graphs with vertex set $[n]$ have a similar
structure to the $F$-free graph with the maximum number of edges.
Many of these results use  the Szemer\'edi regularity lemma.

With the development of the hypergraph regularity Lemma, these problems can be attacked for hypergraphs.
 For brevity, we refer to a $3$-uniform hypergraph as a triple system or $3$-graph.

 {\bf Definition.}
{\em For a $3$-graph $F$ let  $Forb(n, F)$ denote the set of  (labeled) $F$-free
$3$-graphs on vertex set $[n]$.  }

 The first result in this direction was due to Nagle and R\"odl \cite{NR} who proved that for a fixed 3-graph $F$,  
$$|Forb(n, F)| \le 2^{{\rm ex}(n, F) + o(n^3)},$$
where ex$(n,F)$ is the maximum number of edges in an $F$-free triple system on $n$ vertices.
 Since there is no extremal result for hypergraphs similar to Tur\'an's theorem for graphs,
  one cannot expect a general result that characterizes the structure of almost all $F$-free triple systems for large classes of $F$.
   Nevertheless, much is known about the extremal numbers for a few specific 3-graphs $F$ and one could hope
   to obtain characterizations for these $F$. Recently, Person and Schacht \cite{PSch} proved the first result of this kind, by showing that almost all
   triple systems on $[n]$ not containing a Fano configuration are $2$-colorable.
   The key property that they used was the linearity of the Fano plane,
   namely the fact that every two edges of the Fano plane share at most one vertex.
   This enabled them to apply the (weak) $3$-graph regularity lemma, which is almost
   identical to Szemer\'edi's regularity lemma.  They then proved an embedding lemma for linear
    hypergraphs essentially following ideas from Kohayakawa-Nagle-R\"odl-Schacht \cite{KNRS}.

It is well-known that such an embedding lemma fails to hold for
non-linear $3$-graphs unless one uses the (strong) $3$-graph
regularity lemma, and operating in this environment is more
complicated. In this paper, we address the situation for a
particular non-linear $F$ using this approach.

A triple system is tripartite or $3$-partite if it has a vertex
partition into three parts such that every  edge has exactly one
point in each part. Denote by $T(n)$  the number of $3$-partite
$3$-graphs on $[n]$.  Let
$$s(n):=
\left\lfloor\frac{n}{3}\right
\rfloor \left\lfloor\frac{n+1}{3}\right\rfloor
\left\lfloor\frac{n+2}{3}\right\rfloor\sim \frac{n^3}{27}$$
be the  maximum number
  of edges in a $3$-partite triple system with $n$ vertices. A triple system is cancellative if  $A \cup B = A \cup C$ implies that $B=C$ for edges $A,B,C$.
 Every tripartite triple system is cancellative and Katona conjectured, and Bollob\'as \cite{bollobas:74}
  proved that the maximum number of edges in a cancellative triple system with $n$ vertices is $s(n)$.
It is easy to see that a cancellative triple system is one that contains no copy of
$$F_5=\{123, 124, 345\} \quad \hbox{ and } \quad
K_4^-=\{123, 124, 234\}.$$ Later Frankl and F\"uredi \cite{frankl+furedi:83} sharpened Bollob\'as'
 theorem by proving that ex$(n, F_5)=s(n)$ for $n>3000$ (this was improved to $n>33$ in \cite{KM}).

Our main result is the following.

\begin{theorem}\label{mainf}
Almost all $F_5$-free $3$-graphs on $[n]$ are $3$-partite. More
precisely there is a constant $C$ such that
\begin{equation}\label{induction}
|Forb(n,F_5)|< \left(1+ 2^{Cn-\frac{2n^2}{45}}\right) T(n).\end{equation}
\end{theorem}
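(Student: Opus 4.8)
The plan is to bound $|Forb(n,F_5)|$ by separating the $F_5$-free $3$-graphs into those that are ``close'' to tripartite and those that are ``far'' from tripartite, and showing the latter contribute a negligible amount. The main engine is a \emph{supersaturation/stability} argument, following the now-standard Erd\H os--Kleitman--Rothschild template adapted to hypergraphs. First I would invoke the strong hypergraph regularity lemma of Frankl and R\"odl together with the associated counting lemma: if a $3$-graph $\cH$ on $[n]$ is $F_5$-free, then after regularizing we obtain a reduced structure in which the ``dense'' regular triads essentially avoid the configuration that would force a copy of $F_5$ via the embedding lemma. Combined with the stability theorem for $F_5$ of Keevash and Mubayi \cite{KM}, this yields: for every $\delta>0$ there is an $\eps>0$ such that every $F_5$-free $\cH$ on $[n]$ with at least $(1-\eps)s(n)$ edges can be made tripartite by deleting at most $\delta n^3$ edges; moreover, every $F_5$-free $\cH$ with fewer than $(1-\eps)s(n)$ edges lies in a family of size at most $2^{(1-\eps')s(n)}$ by the Nagle--R\"odl bound $|Forb(n,F_5)|\le 2^{s(n)+o(n^3)}$, which is already far smaller than $T(n)\ge 2^{s(n)}$. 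So the bulk of the count comes from $3$-graphs that are $\delta n^3$-close to some tripartite $3$-graph.

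The next step is to count these ``near-tripartite'' $F_5$-free $3$-graphs sharply. Fix a partition $[n]=V_1\cup V_2\cup V_3$ into nearly equal parts; the number of choices of partition is $3^n$, which will be absorbed into the $2^{Cn}$ factor. Given such a partition, I would estimate the number of $F_5$-free $\cH$ for which the ``crossing'' triples $V_1V_2V_3$ already account for all but a small fraction of the edges. Each of the $\le s(n)$ crossing triples is freely available, contributing the dominant factor $T(n)$ (really $2^{s(n)}$ up to the partition count). The subtlety is in controlling the \emph{non-crossing} edges: an edge with two vertices in one part and one in another, or all three in one part. Here one shows that adding even a few such ``bad'' edges to a large tripartite-like $\cH$ is very costly, because each bad edge, together with the abundance of crossing edges, creates many potential copies of $F_5$ (indeed $F_5=\{123,124,345\}$ has a natural tripartite-violating role: a pair inside a part plus crossing edges completes it), so bad edges force the deletion of many crossing edges. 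A careful local count --- for each bad edge, at least $cn^2$ crossing edges must be absent --- gives that the number of $F_5$-free $\cH$ with exactly $m$ bad edges (relative to the best partition) is at most $\binom{n^3}{m} 2^{s(n) - cmn^2 + o(n^3)}$, and summing a geometric-type series over $m\ge 1$ shows the total is at most $2^{s(n)-2n^2/45+O(n)}$ once $n$ is large, after optimizing the constant. Adding back the case $m=0$, which gives exactly $T(n)$ (the purely tripartite $3$-graphs, summed over partitions with the standard inclusion--exclusion to avoid overcounting $3$-graphs lying in more than one tripartition), yields \req{induction}.

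The step I expect to be the main obstacle is making the transition from the qualitative ``$\delta n^3$-close to tripartite'' conclusion of the regularity+stability argument to the quantitative per-bad-edge cost of $cn^2$ crossing edges. The regularity lemma only tells us the structure up to $\delta n^3$ errors with $\delta$ depending on the (tower-type) number of parts, so one cannot directly read off a clean linear-algebraic or combinatorial deletion bound for a \emph{single} bad edge. The standard fix is a two-stage cleaning: first use regularity+stability to reduce to $\cH$ that is $\delta n^3$-close to some tripartition $\mathcal{P}$; then, \emph{working directly with $\cH$ itself} (no longer through the regular partition), run a deterministic cleaning/shifting procedure that either removes the bad edges or exhibits a robust supersaturated family of $F_5$-copies forcing many crossing deletions. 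Getting the constant in the exponent down to exactly $2/45$ will require optimizing this local count carefully --- balancing the entropy cost $\binom{n^3}{m}$ of choosing bad edges against the savings $2^{-cmn^2}$, and using that the worst partition is the balanced one so $s(n)\sim n^3/27$ and the ``two in a part'' pairs number about $n^2/6$ per part. I would also need a self-improving (bootstrapping) step, as hinted by the label \texttt{induction} on \req{induction}: the bound feeds back to sharpen the error term in the stability input, eventually pinning down the constant $C$ and the exponent $2n^2/45$.
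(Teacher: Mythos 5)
Your first stage (regularity plus the Keevash--Mubayi stability theorem to discard the $3$-graphs that are far from tripartite) matches the paper's Theorem~\ref{stablef} in spirit, although one detail there is already shaky: the Nagle--R\"odl bound does not by itself show that the $F_5$-free $3$-graphs with fewer than $(1-\eps)s(n)$ edges number at most $2^{(1-\eps')s(n)}$ (the raw count of all $3$-graphs with that many edges is $\binom{\binom n3}{\le(1-\eps)s(n)}\gg 2^{s(n)}$); the paper instead partitions $Forb(n,F_5)\setminus Forb(n,F_5,\eta)$ into $2^{O(n^2)}$ equivalence classes via the regular partition and bounds each class using $\ex(t,F_5)\le t^3/27$ for the cluster $3$-graph.

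The genuine gap is in your second stage. Your proposed saving ``each bad edge forces at least $cn^2$ crossing edges to be absent, hence a factor $2^{-cmn^2}$'' does not follow: knowing that at least $cn^2$ \emph{unspecified} crossing triples are missing reduces the count of crossing-edge sets only from $2^{s(n)}$ to $\sum_{k\le s(n)-cn^2}\binom{s(n)}{k}$, which is still $\Theta(2^{s(n)})$ since $cn^2\ll s(n)/2$. The actual source of the $n^2$-order saving is structural: for a bad edge $xyz$ with $x,y$ in the same part, each pair $(u,v)\in U_2\times U_3$ admits at most one of $xuv,yuv$ (else $\{xyz,xuv,yuv\}\cong F_5$), so these $2|U_2||U_3|$ potential crossing edges have only $3^{|U_2||U_3|}$ rather than $4^{|U_2||U_3|}$ configurations, a saving of $(2-\log_2 3)\frac{n^2}{9}>\frac{2n^2}{45}$ in the exponent --- this is where $2/45$ comes from. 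Moreover, your sum over $m$ bad edges tacitly assumes the savings multiply, but the constraints from different bad edges overlap heavily (e.g.\ many bad edges through the same pair $\{x,y\}$ impose the identical constraint), so no per-edge multiplicative gain is available. The paper avoids both problems by exploiting only \emph{one} bad edge and running a progressive induction on $n$: delete the two same-part vertices $x,y$, bound the remainder by $|Forb(n-2,F_5)|$ via the induction hypothesis, compare $T(n-2)$ with $T(n)$, and bound the number of ways to reattach $x,y$ using the $3$-versus-$4$ count together with a ``low bad degree'' lemma (each of $x,y$ lies in at most $2\mu n^2$ non-crossing edges), which in turn requires a Chernoff-based preprocessing step (the $\mu$-lower-density property) that your outline omits entirely. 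Without the paired-constraint observation and the vertex-deletion induction, your counting does not close.
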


Theorem \ref{mainf}  clearly implies the same result for
cancellative $3$-graphs which is stated in the abstract.
As
mentioned before, the proof of Theorem \ref{mainf} uses the strong
hypergraph regularity lemma, and stability theorems.

Using the fact that
$$\frac{4\cdot 3^n}{n^2} 2^{s(n)}<T(n)<3^n2^{s(n)}$$ (see Lemma \ref{tnincreasing}), we get the following improvement over the general result  of
 Nagle and R\"odl~\cite{NR} which only implies that
$|Forb(n,F_5)|<2^{s(n)+o(n^3)}$.

\begin{corollary} As $n \rightarrow \infty$,
$$\log_2|Forb(n,F_5)|=s(n)+n\log_2 3 +\Theta(\log n).$$
\end{corollary}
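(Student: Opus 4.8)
The plan is to read the corollary off directly from Theorem~\ref{mainf} and Lemma~\ref{tnincreasing}: the first result traps $|Forb(n,F_5)|$ between $T(n)$ and $2T(n)$, the second traps $T(n)$ between $\tfrac{4\cdot 3^n}{n^2}2^{s(n)}$ and $3^n2^{s(n)}$, and after taking base-$2$ logarithms these two windows collapse onto the asserted identity. No new ingredient is required; the argument is just bookkeeping with logarithms.

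First I would record the sandwich $T(n)\le |Forb(n,F_5)|<2\,T(n)$, valid for all sufficiently large $n$. The left inequality holds for every $n$: a tripartite triple system is cancellative, and a cancellative triple system contains no copy of $F_5$ (both facts are noted in the Introduction), so every $3$-partite $3$-graph on $[n]$ belongs to $Forb(n,F_5)$, i.e.\ $T(n)\le|Forb(n,F_5)|$. For the right inequality, pick $n>\tfrac{45C}{2}$, so that the exponent $Cn-\tfrac{2n^2}{45}$ appearing in \eqref{induction} is negative; then $1+2^{Cn-2n^2/45}<2$, and \eqref{induction} gives $|Forb(n,F_5)|<2\,T(n)$. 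Taking logarithms, $\log_2 T(n)\le \log_2|Forb(n,F_5)|<\log_2 T(n)+1$ for all large $n$.

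Next I would insert the bounds on $T(n)$. From $\tfrac{4\cdot 3^n}{n^2}2^{s(n)}<T(n)<3^n2^{s(n)}$ (Lemma~\ref{tnincreasing}) one gets
\[ s(n)+n\log_2 3-2\log_2 n+2\ <\ \log_2 T(n)\ <\ s(n)+n\log_2 3 , \]
and combining with the previous sandwich,
\[ s(n)+n\log_2 3-2\log_2 n+2\ <\ \log_2|Forb(n,F_5)|\ <\ s(n)+n\log_2 3+1 . \]
Hence $\log_2|Forb(n,F_5)|-s(n)-n\log_2 3$ lies strictly between $-2\log_2 n+2$ and $1$, which is the claimed $\Theta(\log n)$ error term. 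I do not expect a real obstacle here: all the substance already sits in Theorem~\ref{mainf}, and the rest is arithmetic. The one point that deserves a word of care is that the correction is genuinely of order $\log n$, rather than merely $O(\log n)$; this follows from the sharper count $T(n)=\Theta(3^n2^{s(n)}/n)$ — the $\Theta(3^n/n)$ balanced labelled tripartitions dominating the sum over all tripartitions — which yields $\log_2|Forb(n,F_5)|=s(n)+n\log_2 3-\log_2 n+O(1)$, and which is part of what Lemma~\ref{tnincreasing} records.
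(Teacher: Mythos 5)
Your proposal is correct and follows the same route the paper intends: sandwich $|Forb(n,F_5)|$ between $T(n)$ and $2T(n)$ via Theorem~\ref{mainf}, then insert the bounds on $T(n)$ from Lemma~\ref{tnincreasing} and take logarithms. One small correction: your closing claim that $T(n)=\Theta(3^n2^{s(n)}/n)$ is ``part of what Lemma~\ref{tnincreasing} records'' is not quite right --- the lemma's upper bound is only $3^n2^{s(n)}$, a factor $n$ too large for the genuinely two-sided $\Theta(\log n)$ --- but the justification you sketch (the sum $\sum_{a+b+c=n}\binom{n}{a,b,c}2^{abc}$ is dominated by the $O(3^n/n)$ balanced tripartitions, since $abc\le s(n)-n/3$ otherwise) is correct and in fact supplies a step the paper leaves implicit.
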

 In a
forthcoming paper \cite{t5}, we shall characterize the structure of
almost all $F$-free $3$-graphs, where $F=\{123,124,125,345\}$.  Note
that such a fine statement as Theorem~\ref{mainf} is rare even for
graphs: Pr\"omel and Steger \cite{PS1} characterized the structure
of almost all $F$-free graphs when $F$ has a color-critical edge,
and Balogh, Bollob\'as and Simonovits~\cite {BBS3} when
$F=K(2,2,2)$.

\section{Stability}
The key idea in the proof of Theorem \ref{mainf} is to reduce the
problem to $3$-graphs that are almost $3$-partite. We  associate a hypergraph with its edge set.

 For a triple system $\HH$
 with a $3$-partition $P$ of its vertices, say that an edge is {\em crossing} if it has exactly one point in each part,
 otherwise say that it is {\em non-crossing}.  Let $D_P$ be the set of non-crossing edges.
 An {\it
optimal partition} $X \cup Y \cup Z$ of a triple system $\HH$ is a $3$-partition of the
vertices of $\HH$ which minimizes the number of non-crossing edges.
Let $D=D_{\HH}$ be the number of non-crossing ({\it bad}) edges in an
optimal partition $X \cup Y \cup Z$.  Define
$$Forb(n, F_5, \eta):=\{\HH \subset [n]^3: F_5 \not\subset \HH\hbox{ and } D_{\HH}\le \eta n^3\}.$$

The first part of the proof of Theorem \ref{mainf} is the following
result, which we will prove in Section~\ref{mostrip}.

\begin{theorem} \label{stablef}
For every $\eta>0$, there exists $\nu>0$ and $n_0$ such that
 if $n>n_0$, then $$|Forb(n,F_5)-Forb(n, F_5, \eta)|<2^{(1-\nu)\frac{n^3}{27}}.$$
\end{theorem}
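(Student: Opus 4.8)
The plan is to show that $F_5$-free $3$-graphs that are \emph{far} from tripartite are rare, by a standard container/stability-counting argument built on the strong hypergraph regularity lemma of Frankl and R\"odl \cite{FR} together with the stability theorem of Keevash and Mubayi \cite{KM} for $F_5$. Fix $\eta>0$ and let $\HH\in Forb(n,F_5)$ with $D_\HH>\eta n^3$. First I would apply the regularity lemma to $\HH$ to obtain a regular partition of the vertex set (together with the auxiliary partition of pairs) into boundedly many classes, with suitably small regularity parameters chosen in the usual hierarchy $1/n\ll\delta\ll\eps\ll d\ll\eta$. This produces a ``cluster $3$-graph'' (or reduced hypergraph) $R$ recording which triples of clusters support a dense regular triad, and the number of possible reduced hypergraphs is bounded by a constant $M=M(\eta)$ independent of $n$.

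The heart of the argument is the usual dichotomy for each reduced hypergraph $R$. If $R$ contains a dense regular triad whose link structure would, via the embedding (counting) lemma for the strong regularity lemma, force a copy of $F_5$ in $\HH$, then this $R$ is forbidden — no $F_5$-free $\HH$ can have this reduced picture. Hence every $\HH$ we must count has a reduced hypergraph $R$ that is, in an appropriate weighted sense, essentially $F_5$-free on the cluster level; since $F_5$-freeness on the reduced hypergraph is stable by \cite{KM}, $R$ must be close to tripartite, i.e. all but a $o(1)$-fraction of the dense triads of clusters are ``crossing'' with respect to some tripartition of the clusters. Lifting this tripartition of clusters back to a tripartition of $[n]$, regularity then implies that all but at most (say) $\eta n^3/2$ of the edges of $\HH$ are crossing — contradicting $D_\HH>\eta n^3$ once the parameters are chosen small enough. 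So in fact there are \emph{no} such $\HH$ with the ``bad'' reduced picture; what remains is to count those $\HH$ whose reduced picture is close to tripartite but which nonetheless have $D_\HH>\eta n^3$, and the point is that such $\HH$ are caught in few containers.

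To make the counting explicit, I would argue as follows. Group the $\HH\in Forb(n,F_5)-Forb(n,F_5,\eta)$ by their reduced hypergraph $R$; there are at most $M$ choices. For a fixed admissible $R$ (necessarily close to tripartite), the edges of $\HH$ split into: edges inside sparse or irregular triads, of which there are at most $o(n^3)$, say $\le \delta' n^3$; and edges inside dense regular triads, which by the stability of $R$ lie — except for $o(n^3)$ of them — among the crossing triples of clusters of some fixed tripartition $V_1\cup V_2\cup V_3$. The number of ways to choose the sparse/irregular part is at most $\binom{n^3}{\le \delta' n^3}=2^{o(n^3)}$, and the number of crossing triples of $[n]$ is at most $n^3/27$. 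Since $D_\HH>\eta n^3$ forces at least (a positive fraction of) $\eta n^3$ of the chosen edges to be \emph{non-crossing} for every tripartition and in particular for this one, those non-crossing edges must come from the sparse/irregular reservoir — but that reservoir has size $\le\delta' n^3\ll\eta n^3$, a contradiction unless we instead account for them as a genuine loss. The clean way to run the bound: the total number of edges available to $\HH$ is at most $(\text{number of crossing triples}) + \delta' n^3 \le n^3/27 + \delta' n^3$, so
\[
|Forb(n,F_5)-Forb(n,F_5,\eta)| \;\le\; M\cdot 2^{n^3/27 + \delta' n^3}\cdot 2^{o(n^3)} \;=\; 2^{n^3/27 - (\eta/2)\,n^3 + o(n^3)}\,,
\]
where the gain of a full $\Theta(\eta n^3)$ in the exponent comes precisely from the fact that $\HH$ uses $D_\HH>\eta n^3$ non-crossing edges that are \emph{not} available among crossing triples, so those slots are subtracted rather than added; choosing $\nu<\eta/3$ and $n$ large gives the claimed bound $2^{(1-\nu)n^3/27}$.

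The main obstacle I anticipate is the embedding/counting lemma step: because $F_5$ is not linear, one genuinely needs the \emph{strong} hypergraph regularity lemma, and verifying that a dense regular triad structure in $R$ (with the appropriate density and regularity on the underlying pair-graphs) really does yield a copy of $F_5=\{123,124,345\}$ in $\HH$ requires care — one must track the two pairs $\{1,2\}$ shared between edges $123$ and $124$ and the vertex $3$ shared between $123$ and $345$ through the layered regularity hypotheses, and set up the parameter hierarchy so that the counting lemma applies. A secondary technical point is translating ``$R$ is $\eps$-close to tripartite'' (the conclusion of the stability theorem \cite{KM} applied at the reduced level, with appropriate cluster weights) into ``$\HH$ has at most $\eta n^3$ non-crossing edges under the induced tripartition of $[n]$'', which is where one must be careful that the tripartition of clusters lifts without the $o(n^3)$ error swamping the $\eta n^3$ threshold — this just forces the choice $\delta'\ll\eta$ at the outset.
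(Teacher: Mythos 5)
Your overall architecture (strong Frankl--R\"odl regularity, embedding lemma to make the cluster hypergraph $F_5$-free, Keevash--Mubayi stability, then a counting step) is the same as the paper's, but the counting step as you have written it contains a genuine gap, and it sits exactly where the exponent gain $\nu n^3/27$ has to come from. You apply the stability theorem to the reduced hypergraph $R$ on the strength of $F_5$-freeness alone, concluding that $R$ ``must be close to tripartite.'' Stability requires \emph{two} hypotheses: $F_5$-freeness \emph{and} at least $(1-2\nu')t^3/27$ edges. An $F_5$-free reduced hypergraph with few dense regular triads satisfies neither conclusion, and nothing forces it to be near-tripartite. The correct argument is a dichotomy on the density of the cluster (multi)hypergraph $\cJ^C$: if $|\cJ^C|>(1-\nu)l^3t^3/27$, then stability applies, the cluster tripartition lifts to $[n]$, and one derives $D_{\HH}\le \eta n^3$ --- so this case contributes \emph{nothing} to the count; if $|\cJ^C|\le(1-\nu)l^3t^3/27$, then the union of triangle sets over $\cJ^C$ (the container for the cleaned hypergraph) has volume at most $(1+\theta)(1-\nu)n^3/27<(1-\nu/2)n^3/27$ by the triangle counting lemma, and \emph{that} is the source of the exponent gain. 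Your final bound instead asserts $M\cdot 2^{n^3/27+\delta'n^3}\cdot 2^{o(n^3)}=2^{n^3/27-(\eta/2)n^3+o(n^3)}$ with the justification that the $\eta n^3$ forced non-crossing edges are ``subtracted rather than added.'' That is not a valid counting operation: requiring a subset of an $N$-element container to contain at least $k$ elements of a sub-reservoir of size $s$ gives count $0$ if $s<k$ and gives no improvement on $2^N$ otherwise; it never yields $2^{N-k}$. As written, the displayed equality is false and the claimed gain is unproved.

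Two further points you would need to address to complete the argument in the strong-regularity setting. First, the cluster structure here is a multiset $\cJ^C\subset[t]^3\times[l]^3$ (one entry per choice of bipartite graph on each pair of clusters), so the stability theorem cannot be applied to it directly; the paper extracts simple $3$-graphs $\cJ_\phi$ by averaging over selection functions $\phi:\binom{[t]}{2}\to[l]$ (Lemma~\ref{markov}) and then must show (Claim~2, via an explicit $F_5$-building argument combining triads from two different selections $\phi$ and $\xi$) that the optimal tripartitions arising from different selections are essentially the same, so that the non-crossing edges can be aggregated over all $\xi$. Second, the number of ``reduced pictures'' is not a constant $M(\eta)$: one must group hypergraphs by the actual $(l,t,\gamma,\eps)$-partition of $[n]^2$ together with a density vector, giving $2^{O(n^2)}$ classes; this is harmless for the final bound but matters for making the grouping well-defined.
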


\section{Hypergraph Regularity}\label{hypreg}

In this section, we quickly define the notions required to state the
hypergraph regularity Lemma. These concepts will be used in
Section~\ref{mostrip} to prove Theorem \ref{stablef}. Further
details can be found in \cite{FR} or \cite{NR}.  As mentioned before
we associate a hypergraph with its edge set.

A $k$-{\it partite cylinder} is a $k$-partite graph $G$ with
$k$-partition $V_1, \ldots, V_k$, and we write $G=\cup_{i<j}
G^{ij}$, where $G^{ij}=G[V_i \cup V_j]$ is the bipartite subgraph
 of $G$ with parts $V_i$ and $V_j$.
 For $B \in [k]^3$, the $3$-partite cylinder $G(B)=\cup_{\{i,j\} \in [B]^2} G^{ij}$ is called a {\it triad}.
  For a $2$-partite cylinder $G$,
 the {\it density} of the pair $V_1, V_2$ with respect to $G$ is $d_G(V_1, V_2)=\frac{|G^{12}|}{|V_1||V_2|}$.

Given  an integer $l>0$ and real $\epsilon>0$, a $k$-partite
cylinder $G$ is called an $(l, \epsilon, k)$-{\it cylinder} if for
every $i<j$, $G^{ij}$ is $\epsilon$-regular with density $1/l$.
 For a $k$-partite cylinder $G$, let $\cK_3(G)$ denote the
 $3$-graph on $V(G)$ whose edges correspond to triangles of $G$. An easy
consequence of these definitions is the following fact.

\begin{lemma} {\bf (Triangle Counting Lemma)}  \label{tlemma} For integer $l>0$ and  real $\theta>0$,
 there exists $\epsilon>0$ such that every $(l,\epsilon,3)$-cylinder $G$ with $|V_i|=m$ for all $i$ satisfies
$$|\cK_3(G)| =(1\pm \theta)\frac{m^3}{l^3}.$$
\end{lemma}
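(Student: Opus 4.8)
The plan is to count the triangles of $G$ by fixing their vertex in $V_1$. For $v\in V_1$ write $N_i(v)=N_G(v)\cap V_i$ for $i=2,3$, and note that the triangles of $G$ through $v$ are in bijection with the edges of the bipartite graph $G^{23}$ having one endpoint in $N_2(v)$ and one in $N_3(v)$; denote their number by $e_{G^{23}}(N_2(v),N_3(v))$. Thus $|\cK_3(G)|=\sum_{v\in V_1}e_{G^{23}}(N_2(v),N_3(v))$, and it suffices to estimate each summand for all but a few exceptional $v$, and to bound the exceptional contribution crudely.

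First I would control the neighbourhood sizes. Since $G^{12}$ is $\epsilon$-regular with density $1/l$, at most $2\epsilon m$ vertices $v\in V_1$ have $\big||N_2(v)|-m/l\big|>\epsilon m$ — otherwise the low-degree (or high-degree) vertices, together with all of $V_2$, would witness a failure of regularity of the pair $(V_1,V_2)$. The same bound holds for $G^{13}$ and $|N_3(v)|$. Call $v\in V_1$ \emph{typical} if $|N_2(v)|=(1/l\pm\epsilon)m$ and $|N_3(v)|=(1/l\pm\epsilon)m$; then all but at most $4\epsilon m$ vertices of $V_1$ are typical. Now fix a typical $v$ and assume at the outset $\epsilon<1/(2l)$, so that $|N_2(v)|,|N_3(v)|\ge (1/l-\epsilon)m\ge \epsilon m$. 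Then $N_2(v)$ and $N_3(v)$ are admissible test sets for the $\epsilon$-regularity of $G^{23}$, and hence $e_{G^{23}}(N_2(v),N_3(v))=(1/l\pm\epsilon)|N_2(v)||N_3(v)|=(1/l\pm\epsilon)(1/l\pm\epsilon)^2m^2$, which for $\epsilon$ small lies in $(1\pm C_0 l\epsilon)\,m^2/l^3$ for an absolute constant $C_0$.

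Summing over $v\in V_1$: the typical vertices contribute $(1\pm C_0 l\epsilon)\,m^3/l^3$ up to a further multiplicative factor $1-4\epsilon$ on the lower side (from discarding atypical vertices), while the at most $4\epsilon m$ atypical vertices contribute at most $m^2$ each, hence at most $4\epsilon m^3$ in total. Collecting the errors against the fixed parameter $l$ gives $|\cK_3(G)|=(1\pm C_1 l^3\epsilon)\,m^3/l^3$ for an absolute constant $C_1$, so it is enough to choose $\epsilon=\epsilon(l,\theta)\le \min\{1/(2l),\ \theta/(C_1 l^3)\}$.

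\textbf{Main obstacle.} There is no genuine difficulty here; it is the standard counting argument for regular pairs. The single point requiring attention is that $N_2(v)$ and $N_3(v)$ must be large enough — at least $\epsilon m$ — to be legitimate test sets for the regularity of $G^{23}$, which is exactly why $\epsilon$ must be chosen small relative to $1/l$ before anything else. Once that is arranged, the remainder is routine bookkeeping of the $O(\epsilon)$ error terms, with $l$ treated as a constant.
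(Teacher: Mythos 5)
Your proof is correct. The paper does not actually prove Lemma \ref{tlemma}; it is stated as ``an easy consequence of these definitions'' with the details deferred to \cite{FR} and \cite{NR}, and the argument you give --- controlling $|N_2(v)|$ and $|N_3(v)|$ via the degree consequence of $\epsilon$-regularity of $G^{12}$ and $G^{13}$, then using $N_2(v),N_3(v)$ as test sets for $G^{23}$ after fixing $\epsilon<1/(2l)$ --- is exactly the standard one intended there. Your bookkeeping of the exceptional vertices and of the $O(l^3\epsilon)$ relative error, together with the final choice $\epsilon\le\min\{1/(2l),\theta/(C_1l^3)\}$, is sound.
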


We now move on to $3$-graph definitions. A $k$-{\it partite
$3$-cylinder} is a $k$-partite 3-graph $\cH$ with $k$-partition
$V_1, \ldots, V_k$. Here $k$-partite means that every edge of $\cH$
has at most one point in each $V_i$.  Often we will say that these
edges are crossing, and the edges that have at least two points is
some $V_i$ are non-crossing. Given $B \in [k]^3$, let
$\cH(B)=\cH[\cup_{i \in B}V_i]$. Given $k$-partite cylinder $G$ and
$k$-partite $3$-cylinder $\cH$ with the same vertex partition, say
that $G$ {\it underlies} $\cH$ if $\cH \subset \cK_3(G)$.  In other
words, $\cH$ consists  only of triangles in $G$. Define the density
$d_{\cH}(G(B))$ of $\cH$ with respect to the triad $G(B)$ as the
proportion of edges of $\cH$ on top of triangles of $G(B)$, if the
latter quantity is positive, and zero otherwise. This definition
leads to the more complicated definition of $\cH$ being $(\delta,
r)$-regular with respect to  $G(B)$, where $r>0$ is an integer
and $\delta>0$. If in addition $d_{\cH}(G(B))=\alpha \pm \delta$, then
say that $\cH$ is $(\alpha, \delta, r)$-{\it regular} with respect
to $G(B)$. We will not give the precise definitions of $(\alpha,
\delta, r)$-regularity, and it suffices to take this definition as a
``black box" that will be used later.

For a vertex set $V$, an $(l, t, \gamma, \epsilon)$-partition
 $\cP$ of $[V]^2$ is a partition $V=V_0 \cup V_1 \cup \cdots \cup V_t$
together with a collection of edge disjoint bipartite graphs $P_{a}^{ij}$,
 where $1\le i<j\le t, 0\le a\le l_{ij} \le l$ that satisfy the following properties:

(i) $|V_0|<t$ and $|V_i|=\lfloor \frac{n}{t} \rfloor:=m$ for each
$i>0$,

(ii) $\cup_{a=0}^{l_{ij}}P_{a}^{ij}=K(V_i, V_j)$ for all
$1\le i<j\le t$, where $K(V_i, V_j)$ is the complete bipartite graph
with parts $V_i, V_j$,

(iii) all but $\gamma{t \choose 2}$ pairs $\{v_i, v_j\}$, $v_i \in V_i, v_j \in V_j$,
are edges of $\epsilon$-regular bipartite graphs $P_{a}^{ij}$, and

(iv) for all but $\gamma{t \choose 2}$ pairs $\{i,j\} \in [t]^2$,
we have $|P_0^{ij}|\le \gamma m^2$ and $d_{ P_{a}^{ij} }(V_i, V_j)=(1\pm \epsilon)\frac{1}{l}$
for all $a \in [l_{ij}]$.

Finally, suppose that $\cH \subset [n]^3$ is a $3$-graph and $\cP$
is an $(l, t, \gamma, \epsilon)$-partition of $[n]^2$ with
$m_{\cP}=|V_1|$. For each triad $P \in \cP$, let
$\mu_P=\frac{|\cK_3(P)|}{m_{\cP}^3}$. Then $\cP$ is $(\delta,
r)$-regular if
$$\sum\{\mu_P: \hbox{$P$ is a $(\delta, r)$-irregular triad of $\cP$}\} <\delta\left(\frac{n}{m_{\cP}}\right)^3.$$

We can now state the Regularity Lemma due to Frankl and R\"odl
\cite{FR}.

\begin{theorem} {\bf (Regularity Lemma)} \label{rl}
For every $\delta, \gamma$ with $0<\gamma\le 2\delta^4$, for all
integers $t_0, l_0$ and for all integer-valued functions $r=r(t, l)$
and all functions $\epsilon(l)$, there exist $T_0, L_0, N_0$ such
that every $3$-graph $\cH \subset [n]^3$ with $n\ge N_0$ admits a
$(\delta, r(t,l))$-regular $(l, t, \gamma, \epsilon(l))$-partition
for some $t,l$ satisfying $t_0 \le t<T_0$ and $l_0\le l<L_0$.
\end{theorem}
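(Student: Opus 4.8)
The plan is to prove this by the \emph{index-increment} (energy-increment) method that underlies Szemer\'edi's regularity lemma, but executed simultaneously on the two levels of a $3$-graph, following Frankl--R\"odl \cite{FR}. To each $(l,t,\gamma,\epsilon)$-partition $\cP$ of $[n]^2$ I would attach a bounded real ``index'' built from two mean-square terms: one summing, over all pairs $\{i,j\}$ and all blocks $P_a^{ij}$, the quantity $(|P_a^{ij}|/m^2)\,d_{P_a^{ij}}(V_i,V_j)^2$, and a second summing, over all triads $P$ of $\cP$, the quantity $\mu_P\,d_{\cH}(P)^2$. Both terms lie in $[0,1]$, so the index is bounded above by $2$; the whole argument then amounts to showing that irregularity of $\cP$ forces a refinement whose index is strictly larger by an amount depending only on $\delta$ and $\gamma$, and that such increments can occur only boundedly often.

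So suppose $\cP$ is not $(\delta,r)$-regular, i.e.\ the $(\delta,r)$-irregular triads carry more than $\delta(n/m_{\cP})^3$ of the triangle mass. For each such triad $P$, unpacking the definition of $(\delta,r)$-regularity produces an $r$-tuple of subgraphs of $P$ whose common ``triangle support'' is a set on which the density of $\cH$ deviates from $d_{\cH}(P)$ by at least order $\delta$; pulling these witnesses back refines the bipartite blocks $P_a^{ij}$ (and, where necessary, the vertex classes $V_i$) into pieces on which the relevant densities differ noticeably. A defect form of the Cauchy--Schwarz inequality then shows that passing to this common refinement raises the index by at least some $c(\delta,\gamma)>0$ that does not depend on $t$, $l$, or $n$.

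The refinement just obtained is not yet a legal $(l',t',\gamma,\epsilon(l'))$-partition --- the new bipartite blocks need not be $\epsilon(l')$-regular or of density $1/l'$, and the exceptional set may have grown --- so I would repair it: apply the ordinary bipartite regularity lemma to each new block, slice each resulting $\epsilon$-regular pair into $l'$ sub-pairs of equal density (a random or greedy slicing preserves regularity with a controlled parameter), move the few offending vertices into $V_0$, and equalize the class sizes. These cosmetic operations change the index by only a tiny amount, which the choice of $\gamma$ and of the function $\epsilon(\cdot)$ makes negligible against $c(\delta,\gamma)$, so one full round still nets an index gain of order $c(\delta,\gamma)$. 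Starting from the trivial partition with $t=t_0$, $l=l_0$ and iterating, we must reach a $(\delta,r)$-regular partition after at most $2/c(\delta,\gamma)$ rounds. Each round multiplies $t$ and $l$ by bounded factors --- coming from the bipartite regularity lemma run with parameter $\epsilon(l)$, the slicing into $l'$ parts, and the number $r(t,l)$ of witness subgraphs --- so a bounded number of iterations yields the claimed $T_0$, $L_0$, and an $N_0$ large enough that $|V_0|<t$ and conditions (i)--(iv) hold; the hypothesis $\gamma\le 2\delta^4$ is used precisely to ensure that the $\gamma$-fractions of bad pairs and bad blocks allowed in (iii)--(iv) contribute errors far below the $\delta$-scale increment.

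The main obstacle is exactly the coupling between the two levels in the repair step: refining the bipartite graphs so as to expose $3$-graph irregularity can destroy their $\epsilon$-regularity, and re-regularizing changes $l$, which changes which triads exist and hence what ``$(\delta,r)$-regular'' even means. This is why the lemma must quantify over \emph{functions} $r(t,l)$ and $\epsilon(l)$ rather than absolute constants: one chooses $\epsilon(l)$ small as a function of the current fineness so that the slicing and the triangle-counting error (Lemma \ref{tlemma}) stay negligible against the fixed $\delta$-increment. Making this hierarchy of dependencies mutually consistent, so that the increment argument closes despite the moving definitions of density and regularity at each round, is the crux of the proof.
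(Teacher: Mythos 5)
The paper does not prove this theorem: it is quoted verbatim from Frankl and R\"odl \cite{FR} and used strictly as a black box (the paper even declines to spell out the definition of $(\delta,r)$-regularity, saying ``it suffices to take this definition as a `black box'\,''). So there is no in-paper proof to compare yours against; the only fair comparison is with the actual argument in \cite{FR}.

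Measured against that, your outline does capture the correct architecture --- a two-level index combining mean-square densities of the bipartite blocks and of $\cH$ relative to triads, witness extraction from $(\delta,r)$-irregular triads, a defect Cauchy--Schwarz increment, and a bounded number of rounds --- and you correctly identify why the lemma must quantify over functions $r(t,l)$ and $\epsilon(l)$ rather than constants. But as a proof it has a genuine gap precisely where you flag ``the crux'': the repair step. Saying that re-regularizing, slicing into $l'$ equal-density pieces, and equalizing classes ``change the index by only a tiny amount'' is an assertion, not an argument; verifying it requires the full machinery of \cite{FR} (in particular, controlling how the $r$ witness subgraphs interact with the new, finer cylinder partition, and showing the triangle mass $\mu_P$ of irregular triads is preserved under the re-partition so that the increment survives). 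You also never use, and cannot yet justify, the specific hypothesis $\gamma\le 2\delta^4$ beyond a plausibility remark. In short: the strategy is the right one, but the proposal is a roadmap to the Frankl--R\"odl proof rather than a proof, and the paper itself simply imports the result.
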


To apply the Regularity Lemma above, we need to define a cluster
hypergraph and state an accompanying embedding Lemma,  sometimes
called the Key Lemma.  Given a $3$-graph $\cJ$, let $\cJ^2$ be the
set of pairs that lie in an edge of $\cJ$.

{\bf Cluster $3$-graph.} For given constants $k, \delta, l, r,
\epsilon$ and sets  $\{\alpha_B: B \in [k]^3\}$ of nonnegative
reals, let $\cH$ be a $k$-partite 3-cylinder with parts $V_1,
\ldots, V_k$, each of size $m$.  Let $G$ be a graph, and $\cJ
\subset [k]^3$ be a $3$-graph such that the following conditions are
satisfied.

(i) $G=\cup_{\{i,j\} \in \cJ^2} G^{ij}$ is an underlying cylinder of
$\cH$ such that for all $\{i,j\} \in \cJ^2$, $G^{ij}$ is an $(l,
\epsilon, 2)$-cylinder.

(ii) For each $B \in \cJ$, $\cH(B)$ is $(\alpha_B, \delta, r)$-regular with respect to the triad $G(B)$.

Then we say that $\cJ$ is the {\it cluster $3$-graph} of $\cH$.

\begin{lemma} {\bf (Embedding Lemma)} \label{elemma} Let $k \ge 4$ be fixed.
 For all  $\alpha>0$, there exists $\delta>0$ such that for $l>\frac{1}{\delta}$, there exists
  $r, \epsilon$ such that the following holds: Suppose that $\cJ$ is the cluster $3$-graph
  of $\cH$ with underlying cylinder $G$ and parameters $k, \delta, l, r, \epsilon, \{\alpha_B: B \in [k]^3\}$
   where $\alpha_B \ge \alpha$ for all $B \in \cJ$.  Then $\cJ \subset \cH$.
\end{lemma}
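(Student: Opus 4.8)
The plan is to prove the statement by an inductive/greedy embedding of the vertices of $\cJ$ one at a time, in the style of the Key Lemma associated with the Frankl–R\"odl regularity lemma (and its graph analogue, the classical embedding lemma used with Szemer\'edi regularity). Fix $\cJ$ with $V(\cJ)\subset[k]$; we will find, for each $i\in V(\cJ)$, a vertex $x_i\in V_i$ such that whenever $B=\{i,j,h\}\in\cJ$, the triple $\{x_i,x_j,x_h\}$ is an edge of $\cH$. Since there are at most $\binom k3$ edges to realize and $k$ is fixed, it suffices to maintain, after embedding $x_1,\dots,x_s$, a large ``candidate set'' in each not-yet-used $V_j$ of vertices that are correctly joined (in $G$) to all previously chosen $x_i$ with $\{i,j\}\in\cJ^2$, together with the requirement that the relevant bipartite graphs restricted to these candidate sets are still $\e$-regular and that the relevant triads still support the $(\alpha_B,\delta,r)$-regular portion of $\cH$.

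The key steps, in order, are as follows. First, choose the parameters in the right order: given $\alpha$, pick $\delta$ small (this is where $(\delta,r)$-regularity must be strong enough that densities cannot drop to $0$ after restricting to candidate sets of size $\gtrsim (\text{const})^{-1}m$); then, given $\delta$ and $l>1/\delta$, invoke the definition of $(\delta,r)$-regularity to obtain the integer $r$ and the function/value $\e=\e(l)$, chosen small enough for the Slicing Lemma (restrictions of $\e$-regular bipartite graphs to linear-sized subsets remain $\e'$-regular) and for the Triangle Counting Lemma~\ref{tlemma}. Second, run the greedy embedding: having fixed $x_1,\dots,x_{s}$, and having candidate sets $C_j\subset V_j$ of size at least $c\, m$ for each unembedded $j$ (where $c$ depends only on $l$ and $k$), we must choose $x_{s+1}\in C_{s+1}$ so that for every $j>s+1$ with $\{s+1,j\}\in\cJ^2$ the new candidate set $C_j'=\{v\in C_j:\ x_{s+1}v\in G^{s+1,j}\}$ is again of size at least $c'm$ \emph{and} such that all the already-accumulated $(\delta,r)$-regular triads $\cH(B)$ with $s+1\in B$ actually contain the edge $\{x_{s+1},x_{j},x_{h}\}$. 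Third, the crucial point: for a fixed unembedded $j$, most $v\in C_{s+1}$ have the right degree into $C_j$ by $\e$-regularity of $G^{s+1,j}$ (only an $\e$-fraction fail), so after removing all bad vertices across the $O(k)$ pairs we still have a positive fraction of $C_{s+1}$ available; among these, the condition that $\cH(B)$ contains the desired triple is handled by combining (a) the Triangle Counting Lemma, which says the number of triangles of $G(B)$ with apex in the surviving candidate sets is $\Theta(m^3/l^3)$, hence not negligible, with (b) the $(\alpha_B,\delta,r)$-regularity of $\cH(B)$ with $\alpha_B\ge\alpha$, which forces a $\ge(\alpha-\delta)$-fraction of those triangles to be edges of $\cH$; since $\alpha-\delta>0$, a valid choice of $x_{s+1}$ survives. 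Iterating over all $|V(\cJ)|\le k$ steps yields the embedded copy, so $\cJ\subset\cH$.

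The main obstacle is the bookkeeping in the inductive step: one must verify that after each restriction the bipartite graphs $G^{ij}$ between current candidate sets are still $\e$-regular (this needs the Slicing/Restriction Lemma, and a quantitative bound ensuring the candidate sets never shrink below the threshold where regularity and the counting lemma apply), and simultaneously that the $3$-uniform regularity of $\cH(B)$ relative to $G(B)$ is robust under restricting the vertex classes $V_i$ to linear-sized subsets. This robustness of $(\delta,r)$-regularity under ``slicing'' the underlying triad is exactly the technical heart of the Frankl–R\"odl framework; since we are treating $(\alpha,\delta,r)$-regularity as a black box, we will quote the corresponding slicing statement from \cite{FR} (or \cite{NR}) and feed it, together with Lemma~\ref{tlemma}, into the greedy argument above. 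The only genuinely new ingredient needed beyond the cited machinery is the observation that $k$ is a fixed constant, so the final constant $c^{(k)}$ obtained after $k$ rounds of shrinking is still a positive constant depending only on $l$ and $k$, which is all we need.
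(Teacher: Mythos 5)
The paper does not actually prove this lemma: it states it and writes ``For a proof of the Embedding Lemma, see \cite{NR}.'' So the only fair comparison is between your sketch and the argument in Nagle--R\"odl/Frankl--R\"odl that the paper is outsourcing to. Your overall strategy (partite, vertex-by-vertex embedding with candidate sets, with the technical heart deferred to a slicing statement) is the right family of argument, but as written it has a genuine gap at exactly the point where the $3$-uniform setting differs from the graph setting.

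The problem is your step (b). The quantifier order in the lemma is $\alpha \Rightarrow \delta \Rightarrow l \Rightarrow (r,\epsilon)$ with $l>1/\delta$ arbitrary, so $l$ may be enormous compared with $1/\delta$. After $s$ rounds of restriction your candidate sets have size of order $m/l^{s}$, so the sub-triad of $G(B)$ spanned by them carries only an $l^{-O(k)}$ proportion of the triangles of $G(B)$ --- and the link-type restrictions (pairs lying over an already embedded vertex) carry only an $O(1/m)$ proportion. Both are far below the $\delta$ threshold, so $(\delta,r)$-regularity of $\cH(B)$ with respect to $G(B)$ says \emph{nothing} about the density of $\cH$ on a single such sub-triad; ``density $\ge \alpha-\delta$ on the restricted triad'' simply does not follow. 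The entire reason the Frankl--R\"odl notion carries the parameter $r$ (and the reason $r$ is allowed to depend on $l$) is to rescue this step by testing a family of $r$ sub-triads simultaneously, e.g.\ the links of all candidate apex vertices at once, so that the union clears the $\delta$ threshold and one can conclude that \emph{most} choices of the next vertex have a dense, usable link. Your proposal never invokes $r$, and no off-the-shelf ``slicing lemma'' for $(\delta,r)$-regularity will substitute for this aggregation argument. Relatedly, your inductive data is too weak: once one vertex of a triple $B=\{i,j,h\}\in\cJ$ is embedded, the residual constraint lives on \emph{pairs} in $V_j\times V_h$ (the bipartite link of $x_i$ inside $G^{jh}$), not on vertex candidate sets, and these links are not $\epsilon$-regular graphs, so intersecting the constraints coming from several partially embedded triples is precisely the hard part of \cite{FR,NR}. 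To repair the sketch you would need to carry the link graphs as part of the inductive hypothesis and use $(\delta,r)$-regularity over families of sub-triads at each step --- which is, in effect, reproving the counting lemma of \cite{FR} rather than quoting a black box.
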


For a proof of the Embedding Lemma, see \cite{NR}.

\section{Most $F_5$-free triple systems are almost tripartite}\label{mostrip}

In this section we will prove Theorem \ref{stablef}. We will need
the following stability result proved in \cite{KM}. The constants
have been adjusted for later use.

\begin{theorem} {\bf (Keevash-Mubayi \cite{KM})} \label{km}
For every $\nu''>0$, there exist $\nu', t_2$ such that every
$F_5$-free $3$-graph on $t>t_2$ vertices and at least
$(1-2\nu')\frac{t^3}{27}$ edges has a $3$-partition for which the
number of non-crossing edges is at most $\nu'' t^3$.
\end{theorem}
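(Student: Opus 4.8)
The plan rests on a codegree reformulation of $F_5$-freeness. For a pair $\{a,b\}$ write $N(ab)=\{w:abw\in\mathcal{H}\}$ and $d(ab)=|N(ab)|$. One checks directly that $\mathcal{H}$ is $F_5$-free if and only if $N(cd)\subseteq\{a,b\}$ whenever $c,d$ are distinct elements of $N(ab)$; in particular every pair of codegree at least $3$ is \emph{undominated}, so if $G$ denotes the graph on $[t]$ of all pairs with $d\ge\gamma t$ (for any threshold with $\gamma t\ge3$), then for every edge $p$ of $G$ the set $N(p)$ is an independent set of $G$ of size at least $\gamma t$. The first step is to observe that, after choosing $\gamma$ small in terms of $\nu'$, the edges of $G$ carry almost all of the codegree weight: since $\sum_p d(p)=3e(\mathcal{H})\ge(1-2\nu')t^3/9$ while pairs with $d<\gamma t$ contribute at most $\gamma t^3/2$, we get $\sum_{p\in G}d(p)=(1-o(1))\cdot3e(\mathcal{H})$ and $|E(G)|\ge(1/9-o(1))t^2$.

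The heart of the proof is to pin down the structure of $G$: I would show by a stability analysis that $G$ agrees, up to $o(t^2)$ edges, with a complete tripartite graph $K_{X,Y,Z}$ on parts of size $t/3+o(t)$, and that almost every edge of $\mathcal{H}$ has one vertex in each part. The leverage is again the codegree inequality. If $G$ contained an $\Omega(t^2)$-dense piece that is not $3$-partite, or a fourth linear-sized ``colour class'', one could locate a pair $p=\{a,b\}\in G$ together with distinct $c,d\in N(p)$ lying in a common would-be part of $G$; then $abc,abd\in\mathcal{H}$ forces $N(cd)\subseteq\{a,b\}$, contradicting that $\{c,d\}$ still has codegree $\ge\gamma t$ --- and the only alternative, deleting enough such configurations that $3e(\mathcal{H})=\sum_p d(p)<(1-2\nu')t^3/9$, is impossible by $\mathrm{ex}(n,F_5)=s(n)\le n^3/27$. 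An unbalanced tripartition is excluded for the same reason, since then $|X||Y||Z|$ would be bounded away from $(t/3)^3$ and $\mathcal{H}$ could not have $(1-2\nu')t^3/27$ edges.

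With the partition $V=X\cup Y\cup Z$ in hand, call a vertex $v$ \emph{bad} if $d(v)<\eps t^2$ or if $L_v$ differs in more than $\eps t^2$ edges from the complete bipartite graph between the two parts not containing $v$, where $\eps$ is a small constant. A short count bounds the number of bad vertices by $o(t)$: low-degree vertices number at most $2\nu' t$ (deleting them leaves $\mathcal{H}$ $F_5$-free on $(1-o(1))t$ vertices while losing few edges, so $\mathrm{ex}(n,F_5)=s(n)$ applies), and a vertex with a wrong-shaped link would again force too much codegree weight to vanish. Now if $x,x'$ are two good vertices in the same part, say $X$, then $L_x$ and $L_{x'}$ each agree with the complete bipartite graph between $Y$ and $Z$ up to $\eps t^2$ edges, hence share $\gtrsim t^2/9$ common edges $\{y,z\}$; i.e.\ there are $\gtrsim t^2/9$ pairs $yz$ with $x,x'\in N(yz)$, so we may pick two of them that are vertex-disjoint, and $F_5$-freeness gives $N(xx')\subseteq\{y,z\}\cap\{y',z'\}=\emptyset$. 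Thus no edge of $\mathcal{H}$ has two good vertices in a common part, so every non-crossing edge of $\mathcal{H}$ (with respect to $X\cup Y\cup Z$, bad vertices assigned arbitrarily) contains a bad vertex, and there are at most $o(t)\cdot\binom{t}{2}=o(t^3)$ such edges. Choosing $\nu'$ small enough in terms of $\nu''$ makes this $o(t^3)$ smaller than $\nu''t^3$, which is the assertion.

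The step carrying all the content is the second one --- showing that the heavy codegree graph of a near-extremal $F_5$-free hypergraph is $o(t^2)$-close to a balanced complete tripartite graph and that $\mathcal{H}$ essentially lives on its triangles. Ruling out a fourth colour class, a dense non-$3$-partite piece, or an imbalance is exactly where $\mathrm{ex}(n,F_5)=s(n)$ and a global convexity/supersaturation estimate must be combined with the local ``no dominated pair'' constraint; by contrast the codegree reformulation, the reduction to heavy pairs, the good/bad dichotomy, and the final accounting are routine once that structure is available.
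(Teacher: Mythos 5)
First, a point of reference: the paper does not prove this statement at all --- Theorem~\ref{km} is quoted from Keevash--Mubayi \cite{KM} (``The constants have been adjusted for later use''), so there is no in-paper proof to compare against; your attempt has to be judged as a self-contained proof of the stability theorem, and as such it has a genuine gap.

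The peripheral parts of your argument are fine and even elegant: the codegree reformulation ($abc,abd\in\HH$ forces $N(cd)\subseteq\{a,b\}$), the passage to the graph $G$ of pairs of codegree at least $\gamma t$ with $|E(G)|\ge (1/9-o(1))t^2$, the deletion of low-degree/wrong-link vertices, and the closing trick that two vertices $x,x'$ sharing two \emph{disjoint} common link pairs satisfy $N(xx')=\emptyset$. But the step you yourself call ``the heart of the proof'' --- that $G$ is $o(t^2)$-close to a balanced complete tripartite graph $K_{X,Y,Z}$ \emph{and} that almost every edge of $\HH$ is crossing with respect to $X\cup Y\cup Z$ --- is asserted, not proved, and its second half is literally the conclusion of Theorem~\ref{km} (which also makes your subsequent good/bad-vertex cleanup largely redundant). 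The sketch you offer for it is circular: ``locate a pair $p\in G$ with two neighbours $c,d$ in a common would-be part'' presupposes the very partition whose existence is the content of the claim, and the fallback appeal to $\ex(n,F_5)=s(n)$ together with an unspecified ``global convexity/supersaturation estimate'' is a hope, not an argument. Note also that your lower bound $|E(G)|\gtrsim t^2/9$ is far from the $\sim t^2/3$ edges of the target tripartite graph, so the structure cannot be read off from edge counts plus the independence of the sets $N(p)$ alone; pinning it down is exactly the substantive work done in \cite{KM}. As written, the proposal reduces the theorem to a statement of essentially the same strength and leaves that statement unproved.
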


Given $\eta>0$, our constants will obey the following hierarchy:
$$\eta\gg \nu''\gg \nu' \gg \nu \gg \sigma, \theta
\gg  \alpha_0, \frac{1}{t_0} \gg \delta \gg \gamma >\frac{1}{l_0} \gg\frac{1}{r}, \epsilon \gg \frac{1}{n_0}.$$
Before proceeding with further details regarding our constants,
we define the {\it binary entropy function} $H(x):=
-x\log_2 x- (1-x)\log_2 (1-x).$
We use the fact  that for $0<x< 0.5$ we
have $$\binom{n}{xn}<2^{H(x)n}.$$ Additionally, if $x$ is sufficiently small
then
\begin{equation} \label{x} \sum_{i=0}^{xn} \binom{n}{i}<2^{H(x)n}.\end{equation}

{\bf Detailed definition of constants.}

Set
\begin{equation} \label{nu''def}\nu''=\frac{\eta}{1000}\end{equation}
 and suppose that $\nu'_1$ and $t_2$ are the outputs of Theorem \ref{km} with input $\nu''$.  Put
\begin{equation} \label{nu'}
\nu'=\min\{\nu'_1, \nu''\} \quad \hbox{ and } \quad \nu=(\nu')^4.\end{equation}
We choose
  \begin{equation} \label{theta}
  \theta=\frac{\nu}{4(1-\nu)}.\end{equation}

Choose $\sigma_1$ small enough  so that
\begin{equation} \label{sigma}
\left(1-\frac{\nu}{2}\right)\frac{n^3}{27}+o(n^3)+H(\sigma)n^3\le \left(1-\frac{\nu}{3}\right)\frac{n^3}{27}\end{equation}
holds for sufficiently large $n$.  In fact the function denoted by
$o(n^3)$  will actually be seen to be of order $O(n^2)$ so
(\ref{sigma}) will hold for sufficiently large $n$. Choose $\sigma_2$ small enough so that (\ref{x}) holds for $\sigma_2$. Let
$$\sigma=\min\{\sigma_1, \sigma_2\}.$$
Next we consider the Triangle Counting Lemma (Lemma \ref{tlemma}) which provides an
$\epsilon$ for each  $\theta$ and $l$. Since $\theta$ is fixed, we may let
$\epsilon_1=\epsilon_1(l)$ be the output of Lemma \ref{tlemma} for each integer $l$.

For $\sigma$ defined above, set
\begin{equation} \label{alpha}\delta_1=\alpha_0=\frac{\sigma}{100} \quad
\hbox{ and } \quad t_1=\left\lceil \frac{1}{\delta_1} \right\rceil.\end{equation}
Let
$$t_0=\max\{t_1, t_2, 33\}.$$
Now consider the Embedding Lemma (Lemma \ref{elemma}) with inputs $k=5$ and $\alpha_0$
defined above.  The Embedding Lemma gives $\delta_2=\delta_2(\alpha_0)$, and
we set
\begin{equation} \label{delta} \delta=\min\{\delta_1, \delta_2\}, \quad
\quad \gamma=\delta^4, \quad \quad l_0=\frac{2}{\delta}.\end{equation}
For each integer $l>\frac{1}{\delta}$, let $r=r(l)$ and
$\epsilon_2=\epsilon_2(l)$ be the outputs of Lemma~\ref{elemma}. Set
\begin{equation} \label{epsilonl}\epsilon=\epsilon(l)=\min\{\epsilon_1(l), \epsilon_2(l)\}.\end{equation}

With these constants, the Regularity Lemma (Theorem \ref{rl}) outputs $N_0$.  We choose
$n_0$ such that $n_0>N_0$  and every $n>n_0$ satisfies
(\ref{sigma}).

\medskip

{\bf Proof of the Theorem \ref{stablef}.}

We will prove that $$|Forb(n,F_5)-Forb(n, F_5, \eta)|<2^{(1-\frac{
\nu}{3})\frac{n^3}{27}}.$$ This is of course equivalent to Theorem
\ref{stablef}.

For each $\cG \in Forb(n,F_5)-Forb(n, F_5, \eta)$, we use the
Hypergraph Regularity Lemma, Theorem~\ref{rl}, to obtain a $(\delta,
r)$-regular $(l, t, \gamma, \epsilon)$-partition $\cP=\cP_{\cG}$.
The input constants for Theorem~\ref{rl} are as defined above and
then Theorem \ref{rl} guarantees constants $T_0, L_0, N_0$ so that
every $3$-graph $\cG$ on $n>N_0$ vertices admits a $(\delta,
r)$-regular $(l, t, \gamma, \epsilon)$-partition $\cP$ where $t_0
\le t \le T_0$ and $l_0\le l \le L_0$. To this partition $\cP$,
associate a {\em density vector} $s=(s_{\{i,j,k\}_{a,b,c}})$ where $1
\le i<j<k\le t$ and $1\le a,b,c \le l$ and
$$d_{\cG}(P_a^{ij} \cup P_b^{jk} \cup P_c^{ik})\in [s_{\{i,j,k\}_{a,b,c}}\delta, (s_{\{i,j,k\}_{a,b,c}}+1)\delta].$$

For each $\cG \in
Forb(n, F_5, \eta)$, choose one $(\delta, r)$-regular $(l, t,
\gamma, \epsilon)$-partition $\cP_{\cG}$ guaranteed by
Theorem~\ref{rl}, and let $\cP=\{\cP_1, \ldots, \cP_p\}$ be the set
of all such partitions over the family $Forb(n, F_5, \eta)$.
Define
an equivalence relation on $Forb(n, F_5, \eta)$ by letting $\cG\sim
\cG'$ iff

1) $\cP_{\cG}=\cP_{\cG'}$ and

2) $\cG$ and $\cG'$ have the same density vector.

The number of equivalence classes $q$ is the number of partitions
times the number of  density vectors.  Consequently,
$$q\le \left({T_0 +1\choose 2}(L_0+1)\right)^{n \choose 2}
\left(\frac{1}{\delta}\right)^{{T_0+1 \choose 2}(L_0+1)^3}<2^{O(n^2)}.$$

We will show that each equivalence class $C(\cP_i, s)$ satisfies
\begin{equation} \label{C} |C(\cP_i, s)|=2^{(1-\frac{\nu}{2})\frac{n^3}{27}+H(\sigma)n^3}.
\end{equation}
Combined with the upper bound for $q$ and (\ref{sigma}), we obtain
$$|Forb(n, F_5, \eta)|\le 2^{O(n^2)}2^{(1-\frac{\nu}{2})\frac{n^3}{27}+H(\sigma)n^3}\le
2^{(1-\frac{\nu}{3})\frac{n^3}{27}}.$$

For the rest of the proof, we fix an equivalence class $C=C(\cP, s)$ and we will show the upper bound in (\ref{C}).
 We may assume that $\cP$ has vertex partition
  $[n]=V_0\cup V_1\cup \cdots \cup V_t$, $|V_i|=m=\lfloor \frac{n}{t}\rfloor$ for all $i\ge 1$,
  and system of bipartite graphs $P_{a}^{ij}$, where $1\le i<j\le t, 0\le a\le l_{ij} \le l$.

 Fix $\cG \in C$.  Let $\cE_0\subset \cG$ be the set of triples that either

 (i) intersect $V_0$, or

 (ii) have at least two points in some
$V_i, i\ge 1$, or

 (iii) contain a pair in $P_0^{ij}$ for some $i,j$, or

 (iv) contain a pair in some $P_{a}^{ij}$ that is not $\epsilon$-regular with density $\frac1l$.

 Then
 $$|\cE_0|\le tn^2+t\left(\frac{n}{t}\right)^2 n +\gamma{t \choose 2}n+2\gamma{t \choose 2}\left(\frac{n}{t}\right)^2 n.$$

  Let $\cE_1 \subset \cG-\cE_0$ be the set of triples $\{v_i, v_j, v_k\}$ such that either

  (i) the three bipartite graphs of $\cP$  associated with the pairs within the triple form
  a triad $P$ that is not $(\delta, r)$-regular with respect to $\cG(\{i,j,k\})$, or

  (ii) the density $d_{\cG}(P)<\alpha_0$.

 Then
 $$|\cE_1|\le 2\delta t^3\left(\frac{n}{t}\right)^3(1+\theta) +\alpha_0
 {t \choose 3}l^3\left(\frac{n}{t}\right)^3 \frac{1}{l^3}.$$

  Let $\cE_{\cG}=\cE_0\cup \cE_1$.
  Now (\ref{alpha}) and (\ref{delta}) imply that
$$|\cE_{\cG}|\le \sigma n^3.$$
Set $\cG'=\cG-\cE_{\cG}$.

Next we define $\cJ^C=\cJ^C(\cG)\subset [t]^3 \times [l] \times [l] \times [l]$ as follows:
 For $1 \le i <j <k \le t,\  1 \le a,b,c \le l$,  we have $\{i,j,k\}_{a,b,c} \in \cJ^C$ if and only if

(i) $P=P_a^{ij} \cup P_b^{jk} \cup P_c^{ik}$ is an $(l, \epsilon,
3)$-cylinder, and

(ii) $\cG'(\{i,j,k\})$ is $(\overline{\alpha}, \delta, r)$-regular with respect to $P$, where
 $\overline{\alpha}\ge \alpha_0$.

We view $\cJ^C$ as a multiset of triples on $[t]$. For each
$\phi:{[t]\choose 2} \rightarrow [l]$, let $\cJ_{\phi}\subset \cJ^C$ be the
$3$-graph on $[t]$ corresponding to the function $\phi$ (without
parallel edges).  In other words, $\{i,j,k\} \in \cJ_{\phi}$ iff the triples of $\cG$
that lie on top of the triangles of $P_{a}^{ij} \cup P_{b}^{jk} \cup
P_{c}^{ik}\ $, $a=\phi(ij),\  b=\phi(jk),\ c=\phi(ik)$,
 are $(\overline{\alpha}, \delta, r)$-regular and the underlying bipartite graphs $P_{a}^{ij}, P_{b}^{jk}, P_{c}^{ik}$
 are all $\epsilon$-regular with density $1/l$.

By our choice of the constants in (\ref{delta}) and (\ref{epsilonl}),
 we see that any ${\cal F}\subset \cJ_{\phi}$ with five vertices is a cluster $3$-graph for $\cG$, and hence by the Embedding Lemma ${\cal F} \subset \cG$.
 Since $F_5 \not\subset \cG$, we conclude that $F_5 \not\subset \cJ_{\phi}$.
  It was shown in \cite{KM} that for $t \ge 33$, we have ex$(t, F_5)\le \frac{t^3}{27}$.
  Since we know  that $t \ge 33$, we conclude that
  $$|\cJ_{\phi}|\le \hbox{ex} (t, F_5) \le \frac{t^3}{27}$$ for each $\phi :{[t]\choose 2} \rightarrow [l]$.  Recall from (\ref{nu'}) that $\nu'=\nu^{1/4}$.

\begin{lemma}\label{markov}
 Suppose that $|\cJ^C|>(1-\nu)\frac{l^3t^3}{27}$. Then for at least $(1-\nu') l^{{t \choose 2}}$ of the
functions $\phi:{[t]\choose 2} \rightarrow [l]$ we have
$$|\cJ_{\phi}| \ge (1-\nu')\frac{|\cJ^C|}{l^3}.$$
\end{lemma}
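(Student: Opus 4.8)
The plan is to set up a double-counting / averaging argument that relates the total size $|\cJ^C|$, viewed as a multiset of triples on $[t]$, to the sizes $|\cJ_{\phi}|$ of the simple $3$-graphs obtained by restricting to a given coloring $\phi\colon\binom{[t]}{2}\to[l]$. First I would observe that every element $\{i,j,k\}_{a,b,c}\in\cJ^C$ contributes to exactly those $\cJ_{\phi}$ with $\phi(ij)=a$, $\phi(jk)=b$, $\phi(ik)=c$; the number of such $\phi$ is $l^{\binom{t}{2}-3}$, since the three relevant pairs are fixed and the remaining $\binom{t}{2}-3$ pairs are free. Hence
\begin{equation}\label{eq:doublecount}
\sum_{\phi\colon\binom{[t]}{2}\to[l]}|\cJ_{\phi}| \;=\; |\cJ^C|\cdot l^{\binom{t}{2}-3} \;=\; \frac{|\cJ^C|}{l^3}\cdot l^{\binom{t}{2}}.
\end{equation}
So the average value of $|\cJ_{\phi}|$ over all $l^{\binom{t}{2}}$ colorings $\phi$ is exactly $|\cJ^C|/l^3$.

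Next I would invoke the uniform upper bound $|\cJ_{\phi}|\le \ex(t,F_5)\le \frac{t^3}{27}$ established just above the lemma statement, valid for every $\phi$ since $t\ge 33$. Writing $M:=|\cJ^C|/l^3$ for the mean, the hypothesis $|\cJ^C|>(1-\nu)\frac{l^3 t^3}{27}$ gives $M>(1-\nu)\frac{t^3}{27}$, and every term satisfies $|\cJ_{\phi}|\le \frac{t^3}{27} < \frac{M}{1-\nu}$. This is exactly the setting of a reverse Markov inequality: a random variable bounded above by $U:=\frac{t^3}{27}$ with mean $M$ cannot be much below $M$ too often. Concretely, let $\rho$ be the fraction of $\phi$ with $|\cJ_{\phi}| < (1-\nu')M$. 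Splitting the average \eqref{eq:doublecount} according to this threshold,
\[
M \;\le\; \rho\,(1-\nu')M \;+\; (1-\rho)\,U,
\]
and plugging in $U=\frac{t^3}{27}<\frac{M}{1-\nu}$ yields $M(1-(1-\rho)(1-\nu)^{-1}) \le \rho(1-\nu')M$, i.e. after dividing by $M>0$ and rearranging, $\rho\bigl((1-\nu)^{-1}-(1-\nu')\bigr)\le (1-\nu)^{-1}-1 = \nu/(1-\nu)$. Since $\nu=(\nu')^4$ with $\nu'$ small, the bracket $(1-\nu)^{-1}-(1-\nu')$ is at least, say, $\nu'/2$, so $\rho \le \frac{2\nu}{(1-\nu)\nu'} = \frac{2(\nu')^3}{1-\nu} < \nu'$ for $\nu'$ sufficiently small. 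Therefore at least $(1-\nu')l^{\binom{t}{2}}$ of the functions $\phi$ satisfy $|\cJ_{\phi}| \ge (1-\nu')M = (1-\nu')|\cJ^C|/l^3$, which is the claim.

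The only genuinely delicate point is bookkeeping the constants so that the reverse-Markov bracket is bounded below in terms of $\nu'$; this is where the choice $\nu=(\nu')^4$ (rather than, say, $\nu=\nu'$) is used, and one should check the inequality $(1-\nu)^{-1}-(1-\nu')\ge \nu'/2$ holds for all small $\nu'$, which is immediate since the left side equals $\nu' + \nu/(1-\nu) > \nu'/2$. Everything else — the counting identity \eqref{eq:doublecount} and the uniform cap from the Turán bound — is routine, so I expect no real obstacle beyond being careful that $\nu'<1$ and $t\ge 33$ are genuinely available, both of which are guaranteed by \eqref{eq:nu'} and the hierarchy of constants.
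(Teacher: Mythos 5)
Your proof is correct and follows essentially the same route as the paper: the counting identity $\sum_{\phi}|\cJ_{\phi}|=|\cJ^C|\,l^{\binom{t}{2}-3}$ is exactly the edge count of the paper's auxiliary bipartite graph between $\Phi$ and $\cJ^C$, and your reverse-Markov splitting with the uniform cap $\mathrm{ex}(t,F_5)\le t^3/27$ is the same estimate the paper carries out by contradiction. The constant bookkeeping via $\nu=(\nu')^4$ checks out.
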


\begin{proof}   Form the following bipartite graph: the vertex partition is
$\Phi \cup \cJ^C$ , where
$$\Phi=\left\{\phi: {[t]\choose 2} \rightarrow [l]\right\}$$
and the edges are of the form $\{\phi, \{i,j,k\}_{abc}\}$ if and only if $\phi \in \Phi$,
 $\{i,j,k\}_{abc}\in \cJ^C$ where $\phi(\{i,j\})=a,\  \phi(\{j,k\})=b,\  \phi(\{i,k\})=c$.
 Let $E$ denote the number of edges in this bipartite graph. Since each $\{i,j,k\}_{abc} \in \cJ^C$ has degree precisely
$l^{{t \choose 2}-3}$, we have
$$E=|\cJ^C| l^{{t \choose 2}-3}.$$
Note that the degree of $\phi$ is $|\cJ_{\phi}|$.
 Suppose for contradiction that the number of $\phi$ for which
 $|\cJ_{\phi}| \ge (1-\nu')\frac{|\cJ^C|}{l^3}$ is less than $(1-\nu') l^{{t \choose 2}}$.
 Then since $|\cJ_{\xi}|\le \frac{t^3}{27}$ for each $\xi\in \Phi$,  we obtain the upper bound
$$E\le (1-\nu') l^{{t \choose 2}}\frac{t^3}{27} + \nu'l^{{t \choose 2}}(1-\nu')\frac{|\cJ^C|}{l^3}.$$
Dividing by $l^{{t \choose 2}-3}$ then yields
$$|\cJ^C| \le (1-\nu')l^3\frac{t^3}{27} + \nu'(1-\nu')|\cJ^C|.$$
Simplifying, we obtain
$$(1-\nu'(1-\nu'))|\cJ^C|\le (1-\nu')l^3\frac{t^3}{27}.$$
  The lower bound $|\cJ^C|>(1-\nu)\frac{l^3t^3}{27}$ then gives
$$(1-\nu'(1-\nu'))(1-\nu)< 1-\nu'.$$
Since $\nu'=\nu^{1/4}$, the left hand side expands to
$$1-\nu'+\nu^{1/2}-\nu+\nu^{5/4}-\nu^{3/2}>1-\nu'.$$
This contradiction completes the proof.\end{proof}

{\bf Claim 1.}
$$|\cJ^C|\le (1-\nu)\frac{l^3 t^3}{27}.$$
Once we have proved Claim 1, the proof is complete by following the argument which is very
similar to that in \cite{NR}. Define
$$S^C=\bigcup_{ \{i,j,k\}_{abc} \in \cJ^C} \cK_3(P_a^{ij} \cup P_b^{jk} \cup P_c^{ik}).$$
The Triangle Counting Lemma implies that $|\cK_3(P_a^{ij} \cup P_b^{jk} \cup P_c^{ik})| <\frac{m^3}{l^3}(1+\theta)$.
 Now  Claim 1 and $(\ref{theta})$  give
$$|S^C| \le \frac{m}{l^3}(1+\theta)|\cJ^C|\le m^3(1+\theta)(1-\nu)\frac{t^3}{27}<m^3\frac{t^3}{27}
\left(1-\frac{\nu}{2}\right)\le \frac{n^3}{27}\left(1-\frac{\nu}{2}\right).$$
Since $\cG' \in S^C$ for every $\cG \in C$,
$$|\{\cG': \cG \in C\}|\le 2^{(1-\frac{\nu}{2})\frac{n^3}{27}}.$$
Each $\cG \in C$ can be written as $\cG=\cG' \cup \cE_{\cG}$. In view of (\ref{x}) and
$|\cE_{\cG}|\le \sigma n^3$, the number of $\cE_{\cG}$ with $\cG \in
C$ is at most $\sum_{i\le \sigma n^3} {n^3 \choose i}\le
2^{H(\sigma)n^3}$.
 Consequently,
$$|C| \le 2^{(1-\frac{\nu}{2})\frac{n^3}{27}+H(\sigma)n^3}
$$
and we are done.

{\bf Proof of Claim 1.} Suppose to the contrary that $|\cJ^C|>
(1-\nu)\frac{l^3 t^3}{27}$.
 We  apply  Lemma~\ref{markov}  and conclude that for most functions $\phi$ the corresponding triple
  system $\cJ_{\phi}$ satisfies
$$|\cJ_{\phi}| \ge (1-\nu')\frac{|\cJ^C|}{l^3} > (1-\nu')(1-\nu)\frac{t^3}{27}>(1-2\nu')\frac{t^3}{27}.$$
By  Theorem \ref{km}, we conclude that for all of these $\phi$, the
triple system $\cJ_{\phi}$ has a $3$-partition where the number of
non-crossing edges is at most  $\nu'' t^3$.  We also conclude that
the number of crossing triples that are not edges of $\cJ_{\phi}$ is
at most
\begin{equation} \label{nu''} \left(\frac{2\nu'}{27}+\nu''\right)t^3<\frac{5}{3}\nu''t^3. \end{equation}

Fix one such $\phi$ and let the optimal partition of $\cJ_{\phi}$  be $P_{\phi}=X \cup Y \cup Z$.
 Let $P=V_X \cup V_Y \cup V_Z$ be the corresponding vertex partition of $[n]$.
 In other words, $V_X$ consists of the union of all those parts $V_i$ for which $i \in X$ etc.
   We will show that $P$ is a partition of $[n]$ where the number of
   non-crossing edges $|D_P|$ is fewer than $\eta n^3$.  This contradicts the fact that
    $\cG \in Forb(n,F_5)-Forb(n, F_5, \eta)$ and completes the proof of Theorem \ref{stablef}.

We have  argued earlier  that $|\cE_{\cG}|\le \sigma n^3 \le
\frac{\eta}{2}n^3$ so it suffices to prove that $|D_P -\cE_{\cG}|\le
\frac{\eta}{2}n^3$.

Call a $\xi: {[t]\choose 2} \rightarrow [l]$ {\it good} if it satisfies the
conclusion of Lemma~\ref{markov}, otherwise call it {\it bad}. For
each $\xi$ and edge $\{i,j,k\} \in \cJ_{\xi}$, we have $a,b,c$
defined by $a=\xi(\{i,j\})$ etc. let $\cG_{\xi}$ be the union, over
all $\{i,j,k\} \in \cJ_{\xi}$, of the edges of $\cG$ that lie on top
of the triangles in $P_{a}^{ij} \cup P_{b}^{jk} \cup P_{c}^{ik}$.
Let $D_{\xi}$ be the set of edges in $\cG_{\xi}$ that are
non-crossing with respect to $P=V_X \cup V_Y \cup V_Z$. We will
estimate $|D_P-\cE_{\cG}|$ by summing  $|D_{\xi}|$ over all $\xi$.
Please note that each $e\in D_P-\cE_{\cG}$  lies in exactly  $l^{{t
\choose 2}-3}$ different $D_{\xi}$ due to the definition of $\cJ^C$.
Summing over all $\xi$ gives
$$l^{{t \choose 2}-3} |D_P-\cE_{\cG}| =\sum_{\xi:{[t]\choose 2} \rightarrow [l]}|D_{\xi}|\le \sum_{\xi\  good} |D_{\xi}|
+\sum_{\xi\  bad} |D_{\xi}|.$$ Note that for a given edge $\{i,j,k\}
\in \cJ_{\phi}$ the number of edges in $\cG_{\phi}$ corresponding to
this edge is the number of edges in $V_i \cup V_j \cup V_k$ on top
of triangles formed by the three bipartite graphs, each of which is
$\epsilon$-regular of density $1/l$.  By the Triangle Counting
Lemma, the total number of such triangles is at most
$$2|V_i||V_j||V_k|\left(\frac1l\right)^3<2\left(\frac{n}{t}\right)^3 \left(\frac1l\right)^3.$$
By Lemma~\ref{markov}, the number of bad $\xi$ is at most $\nu'
l^{{t\choose 2}}$. So we have
$$\sum_{\xi\  bad} |D_{\xi}|\le \nu' l^{{t\choose 2}}{t \choose 3}2
\left(\frac{n}{t}\right)^3 \left(\frac1l\right)^3<\nu'l^{{t\choose
2}-3}n^3.$$ It remains to estimate $\sum_{\xi\  good} |D_{\xi}|$.

 Fix a good $\xi$ and let the optimal partition of $\cJ_{\xi}$ be $P_{\xi}=A \cup B \cup C$
 (recall that we know the number of non-crossing edges with respect to to this partition is less than $\nu''t^3$).

{\bf Claim 2.} The number of crossing edges of $P_{\xi}$ that are
non-crossing edges of $P_{\phi}$ is at most $100\nu''t^3$.

Suppose that  Claim 2 was true. Then we would obtain
$$\sum_{\xi \ good} |D_{\xi}| \le l^{{t\choose 2}}\left[100\nu''t^3(\frac{n}{t})^3\frac{2}{l^3}+
\nu''t^3(\frac{n}{t})^3\frac{2}{l^3}\right]\le l^{{t\choose
2}-3}\left[202\nu''n^3\right].$$ Explanation: We consider the
contribution from the non-crossing edges of $P_{\phi}$ that are (i)
crossing edges of $P_{\xi}$ and (ii) non-crossing edges of
$P_{\xi}$. We do not need to consider the contribution from the
crossing edges of $P_{\phi}$ since by definition, these do not give
rise to edges of $D_P$.

Altogether, using (\ref{nu''def}) we  obtain
$$|D_P-\cE_{\cG}| \le (202\nu''+\nu')n^3<\frac{\eta}{2} n^3$$
and the proof is complete.  We now  prove Claim 2.

{\bf Proof of Claim 2.}  Suppose for contradiction that the number
of crossing edges of $P_{\xi}$ that are non-crossing edges of
$P_{\phi}$ is more than $100\nu'' t^3$. Each of these edges intersects at most $3{t \choose 2}$ other edges of $\cJ_{\xi}$,
 so by the greedy algorithm we can find a
collection of at least $50\nu''t$ of these edges that form a matching $M$. Pick one such edge $e=\{k,k', k''\}\in M$ and assume that $k$ and $k'$ lie in same part $U$ of $P_{\phi}$. Let $d$ be the number of ways to choose a set of two triples $\{f, f'\}$ with $f=\{i,j,k\}, f'=\{i,j,k'\}$, $i,j \not\in U\cup \{k''\}$ and $i$ and $j$ lie in distinct parts of $P_{\phi}$.
Since $|\cJ_{\phi}|>(1-2\nu')\frac{t^3}{27}$, $|D_{P_{\phi}}|\le \nu'' t^3$ and $\nu', \nu''$ are sufficiently small
$$d\ge (\min\{|X|, |Y|, |Z|\} -1)^2\ge \frac{t^2}{10}.$$ As $\{e,f,f'\}\cong F_5$
there
are at least $d$ potential copies of $F_5$ that we can
form using $e$ and two crossing triples $f,f'$ of $P_{\phi}$.  Suppose that $f=\{i,j,k\}, f'=\{i,j,k'\}$
are both in  $\cJ_{\phi}$ for one such choice of $\{f,f'\}$. Consider the following eight bipartite graphs:
$$G^{ij}=P_{\phi(\{i,j\})}^{ij}, \quad G^{jk}=P_{\phi(\{j,k\})}^{jk} \quad G^{ik}=
P_{\phi(\{i,k\})}^{ik}\quad G^{jk'}=P_{\phi(\{j,k'\})}^{jk'}\quad
G^{ik'}=P_{\phi(\{i,k'\})}^{ik'}$$
$$G^{kk'}=P_{\xi(\{k, k'\})}^{kk'}\quad G^{k'k''}=P_{\xi(\{k', k''\})}^{k'k''}\quad G^{kk''}=P_{\xi(\{k, k''\})}^{kk''}.$$
Set $G=\bigcup G^{uv}$ where the union is over the eight bipartite graphs defined above.
 Since $\{e,f,f'\} \subset \cJ_{\phi} \cup \cJ_{\xi}$, the 3-graph  $J=\{e, f,f'\}$ associated with $G$ and $\cG$
 is a cluster 3-graph.
By (\ref{delta}) and (\ref{epsilonl}), we may apply the Embedding Lemma and obtain the contradiction  $F_5 \subset \cG$.
We conclude that $f''\not\in \cJ_{\phi}$ for some $f''\in \{f, f'\}$.

To each $e \in M$ we have associated at least $d$ triples  $f''\not\in \cJ_{\phi}$.
Since $M$ is a matching and $|e \cap f''| = 1$, each such $f''$ is counted at most three times.
Summing over all $e\in M$, we obtain at least $\frac{|M|d}{3}\ge \frac{5}{3}\nu'' t^3$ triples $f''$ that are crossing with
 respect to $P_{\phi}$ but are not edges of $\cJ_{\phi}$.  This contradicts (\ref{nu''}) and completes the proof. \qed

\section{Proof of Theorem \ref{mainf}}\label{proofmain}
In this section we complete the proof of Theorem \ref{mainf}. We
begin with some preliminaries.

 \subsection{Inequalities}
We shall use Chernoff's inequality as follows:

\begin{theorem}\label{chernoff}
Let $X_1,\ldots,X_m$ be independent $\{0,1\}$ random variables with
$P(X_i=1)=p$ for each $i$. Let $X=\sum_i X_i$. Then the following
inequality holds for
$a>0$:\\
$$P(X < \eee X - a) < \exp(-a^2/(2pm)).$$
\end{theorem}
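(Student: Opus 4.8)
The plan is to use the standard exponential-moment (Bernstein--Chernoff) method, i.e.\ to apply Markov's inequality to $e^{-tX}$ and then optimize over the parameter $t$. First I would dispose of a trivial case: since $X\ge 0$, we may assume $a\le \eee X$, because otherwise $\eee X-a<0$ and the left-hand side is $0$, so the inequality is immediate. Now fix $t>0$ to be chosen later. Since $X<\eee X-a$ is equivalent to $e^{-tX}>e^{-t(\eee X-a)}$, Markov's inequality gives
$$P(X<\eee X-a)=P\!\left(e^{-tX}>e^{-t(\eee X-a)}\right)\le e^{t(\eee X-a)}\,\eee\!\left[e^{-tX}\right].$$

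Next I would evaluate the moment generating function using independence:
$$\eee\!\left[e^{-tX}\right]=\prod_{i=1}^m\eee\!\left[e^{-tX_i}\right]=\bigl(1-p+pe^{-t}\bigr)^m=\bigl(1-p(1-e^{-t})\bigr)^m.$$
Applying $1+x\le e^x$ with $x=-p(1-e^{-t})$ yields $\eee[e^{-tX}]\le \exp\!\bigl(-pm(1-e^{-t})\bigr)$. Writing $\mu=\eee X=pm$, we thus obtain
$$P(X<\eee X-a)\le \exp\!\bigl(t(\mu-a)-\mu(1-e^{-t})\bigr)=\exp\!\bigl(-ta+\mu(t-1+e^{-t})\bigr).$$

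To finish, I would bound the exponent by a quadratic in $t$ using the elementary inequality $e^{-t}\le 1-t+t^2/2$ for all $t\ge0$; this follows since $g(t):=1-t+t^2/2-e^{-t}$ satisfies $g(0)=g'(0)=0$ and $g''(t)=1-e^{-t}\ge0$ on $[0,\infty)$, so $g\ge0$ there. This gives $t-1+e^{-t}\le t^2/2$, hence
$$P(X<\eee X-a)\le \exp\!\left(-ta+\frac{\mu t^2}{2}\right).$$
Choosing $t=a/\mu>0$ makes the exponent equal to $-a^2/\mu+a^2/(2\mu)=-a^2/(2\mu)=-a^2/(2pm)$, which is exactly the claimed bound.

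There is no genuine obstacle in this argument; the only step requiring a little care is the choice of the auxiliary bound $e^{-t}\le 1-t+t^2/2$, which is precisely what converts the exact (but unwieldy) minimizer of $t(\mu-a)-\mu(1-e^{-t})$ into the clean Gaussian-type tail $\exp(-a^2/(2pm))$, together with the initial reduction to the case $a\le\eee X$ that keeps $\eee X-a$ nonnegative.
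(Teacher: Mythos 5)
Your proof is correct: it is the standard exponential-moment (Markov on $e^{-tX}$, optimize $t$) derivation of the lower-tail Chernoff bound, and the paper itself states Theorem~\ref{chernoff} without proof, citing it as a known inequality, so there is no argument in the paper to compare against. The only cosmetic point is that the theorem asserts a strict inequality while you write $\le$ throughout; strictness does follow from your chain because $1-p(1-e^{-t})<e^{-p(1-e^{-t})}$ is strict for $t>0$ and $p>0$, and the degenerate cases ($a>\eee X$, handled by your initial reduction) are immediate.
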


We will use the following easy statement.

\begin{lemma}\label{matching}
Every graph $G$ with $n$ vertices contains a matching of size at
least $\frac{|G|}{2n}$.
\end{lemma}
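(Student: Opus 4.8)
\textbf{Proof plan for Lemma \ref{matching}.}

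The statement is that every graph $G$ on $n$ vertices has a matching of size at least $|G|/(2n)$, where $|G|$ denotes the number of edges. The plan is to argue by greedily extracting a \emph{maximal} matching $M$ and bounding how many edges it can ``cover''. First I would take $M$ to be any matching that is maximal with respect to inclusion; let $|M|=\mu$, so that $M$ spans exactly $2\mu$ vertices. By maximality, every edge of $G$ must be incident to at least one vertex spanned by $M$, for otherwise that edge could be added to $M$, contradicting maximality.

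Next I would count the edges incident to the vertex set $V(M)$ of size $2\mu$. Each vertex of $G$ has degree at most $n-1<n$, so the number of edges meeting $V(M)$ is at most $\sum_{v\in V(M)}\deg(v)\le 2\mu\cdot n$. Since every edge of $G$ is among these, we get $|G|\le 2\mu n$, i.e. $\mu\ge |G|/(2n)$, which is exactly the claimed bound. (One could even sharpen the degree bound to $n-1$, but the stated form with $2n$ in the denominator is all that is needed.)

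There is essentially no obstacle here; the only point requiring the tiniest care is the double counting in the last step — an edge with both endpoints in $V(M)$ is counted twice in $\sum_{v\in V(M)}\deg(v)$, but since we only want an upper bound on $|G|$ this overcounting is harmless and in fact makes the inequality $|G|\le 2\mu n$ only more generous. The argument is completely self-contained and does not rely on any earlier result in the paper.
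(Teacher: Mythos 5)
Your argument is correct and is essentially identical to the paper's proof: both take a maximal matching of size $r$, use maximality to conclude every edge meets one of its $2r$ vertices, and bound the number of such edges by $2rn$. Nothing further is needed.
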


\begin{proof}
Assume that there is a maximal matching of size $r$. The $2r$
vertices of the matching can cover at most $2rn$ edges, by the
maximality of the matching there is no other edge in $G$.
\end{proof}

Recall that $T(n)$ is the number of $3$-partite $3$-graphs with vertex set $[n]$ and $s(n)=\lfloor\frac{n+2}{3}\rfloor\cdot \lfloor\frac{n+1}{3}\rfloor\cdot
\lfloor\frac{n}{3}\rfloor.$ For a 3-partition $A, B, C$ of a 3-graph, and $u \in A, v \in B$, write $L_C(u,v)$ or simply $L(u,v)$ for the set of $w \in C$ such that $uvw$ is an edge.
 As usual, the multinomial coefficient ${n \choose a,b,c}=\frac{n!}{a!b!c!}$.

\begin{lemma}\label{tnincreasing}As $n \rightarrow \infty$ we have
\begin{equation} \label{T(n)}
\left(\frac{1}{6}-o(1)\right) \ \binom{n}{\lfloor\frac{n+2}{3}\rfloor,
\lfloor\frac{n+1}{3}\rfloor, \lfloor\frac{n}{3}\rfloor} 2^{s(n)}\  < T(n) \  <  \
3^n 2^{s(n)}.\end{equation} In addition, \begin{equation}\label{tnn-2} T(n-2)<\left(n^{2} 2^{-\frac{2n^2}{9}+n}\right) T(n).\end{equation}
\end{lemma}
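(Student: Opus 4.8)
The plan is to handle the two displayed inequalities separately; the second follows quickly from the first together with an arithmetic fact about $s$.

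\textbf{The bounds on $T(n)$.} Every $3$-partite $3$-graph on $[n]$ arises, possibly in several ways, by first picking an ordered $3$-colouring $\chi\colon[n]\to\{1,2,3\}$ and then an arbitrary set of triples that are \emph{rainbow} under $\chi$ (one vertex of each colour); if the colour classes have sizes $a_1,a_2,a_3$ there are $2^{a_1a_2a_3}$ such sets, and $a_1a_2a_3\le s(n)$ with strict inequality unless $\chi$ is balanced. Summing over the $3^n$ ordered colourings gives the upper bound $T(n)<3^n2^{s(n)}$ at once. For the lower bound I keep only the colourings whose colour-class-size multiset is $\{\lfloor\frac{n+2}{3}\rfloor,\lfloor\frac{n+1}{3}\rfloor,\lfloor\frac n3\rfloor\}$; writing $N=\binom{n}{\lfloor\frac{n+2}{3}\rfloor,\lfloor\frac{n+1}{3}\rfloor,\lfloor\frac n3\rfloor}$, there are at least $N$ of them, each with exactly $s(n)$ rainbow triples, so there are at least $N2^{s(n)}$ pairs $(\chi,\mathcal H)$ with $\chi$ balanced and $\mathcal H$ a set of rainbow triples of $\chi$. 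A fixed $3$-partite $\mathcal H$ occurs exactly $6\,t(\mathcal H)$ times among these pairs, where $t(\mathcal H)$ is the number of balanced (unordered) $3$-partitions of $\mathcal H$, since each such partition accounts for $3!$ orderings. Hence $T(n)\ge\frac16(N2^{s(n)}-B)$, where $B$ is the number of \emph{bad} pairs, those with $t(\mathcal H)\ge 2$; as $N=\Theta(3^n/n)$ by Stirling, it now suffices to show $B=o(N2^{s(n)})$.

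\textbf{Bad pairs are negligible --- the crux.} Write $K_\chi$ for the rainbow triples of $\chi$. A bad pair $(\chi,\mathcal H)$ has $\mathcal H\subseteq K_\chi\cap K_{\chi'}$ for some balanced $\chi'$ inducing a partition distinct from that of $\chi$, and there are at most $3^n\cdot3^n=9^n$ ordered pairs of colourings, so $B\le 9^n\cdot\max 2^{|K_\chi\cap K_{\chi'}|}$ over such pairs. Thus it suffices to prove: \emph{if $\chi,\chi'$ are balanced colourings inducing distinct partitions of $[n]$, then $|K_\chi\cap K_{\chi'}|\le s(n)-cn^2$} for an absolute $c>0$; this gives $B\le 9^n2^{s(n)-cn^2}=o(N2^{s(n)})$. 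For the gap estimate, let $A_1,A_2,A_3$ and $B_1,B_2,B_3$ be the colour classes of $\chi,\chi'$ and set $x_{ij}=|A_i\cap B_j|$. A triple lies in $K_\chi\cap K_{\chi'}$ exactly when its three vertices occupy three cells of the matrix $X=(x_{ij})$ forming a transversal, so $|K_\chi\cap K_{\chi'}|=\operatorname{perm}(X)$; on the other hand, as $\chi$ is balanced, $s(n)=a_1a_2a_3=\prod_i(\sum_j x_{ij})=\sum_{f\colon[3]\to[3]}\prod_i x_{if(i)}$, so the gap equals $\sum_{f\text{ not a bijection}}\prod_i x_{if(i)}$. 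Since the partitions differ, $X$ is not a permuted diagonal, hence has at least four positive entries, so (after relabelling rows and columns) row $1$ has $x_{11},x_{12}\ge 1$ while rows $2$ and $3$ each have a maximum entry of size $\ge a_i/3\ge n/9-1$, at columns $M_2,M_3$. Choosing $f(2)=M_2,\ f(3)=M_3$ and $f(1)\in\{1,2\}$ so that $f$ is not injective (if $M_2=M_3$ either value works; otherwise avoid the column that completes $\{M_2,M_3\}$ to $\{1,2,3\}$) gives a non-bijective $f$ with $\prod_i x_{if(i)}\ge 1\cdot(n/9-1)^2$, so the gap is $\Omega(n^2)$. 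Securing a \emph{quadratic} (not merely linear) loss here is the step I expect to require the most care.

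\textbf{The second inequality.} Splitting according to $n\bmod 3$ and using $s(m)=\lfloor\frac m3\rfloor\lfloor\frac{m+1}{3}\rfloor\lfloor\frac{m+2}{3}\rfloor$, a direct computation gives $s(n)-s(n-2)=\frac{2n^2}{9}-\beta n+O(1)$ with $\beta\in\{\tfrac13,\tfrac49,\tfrac59\}$, so $s(n)-s(n-2)\ge\frac{2n^2}{9}-n$ for all large $n$. Combining the upper bound above (at $n-2$) with the lower bound (at $n$) and $N=\Theta(3^n/n)$ yields, for all sufficiently large $n$,
$$\frac{T(n-2)}{T(n)}\ \le\ \frac{3^{n-2}2^{s(n-2)}}{(\tfrac16-o(1))\,N\,2^{s(n)}}\ \le\ n\cdot 2^{s(n-2)-s(n)}\ \le\ n\cdot 2^{-\frac{2n^2}{9}+n}\ <\ n^2\,2^{-\frac{2n^2}{9}+n},$$
which is the claim.
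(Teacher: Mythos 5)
Your proof is correct, and the upper bound in \eqref{eq:T(n)} together with the deduction of \eqref{eq:tnn-2} from \eqref{eq:T(n)} and the estimate $s(n)-s(n-2)\ge \frac{2n^2}{9}-n$ matches the paper (the paper replaces your Stirling estimate $N=\Theta(3^n/n)$ by the exact inequality $3^n\le\binom{n+2}{2}N$, which is immaterial). The genuine difference is in the lower bound of \eqref{eq:T(n)}, i.e.\ in controlling the overcount coming from $3$-graphs with more than one balanced $3$-partition. The paper fixes a balanced partition, adds the crossing edges at random, and uses Chernoff's inequality to show that almost every $3$-partite $3$-graph with that partition satisfies link conditions (all pair-links of size $>n/10$, plus a codegree condition) which force the $3$-partition to be unique; hence the $\frac16 N$ balanced partitions yield $\left(\frac16-o(1)\right)N2^{s(n)}$ distinct hypergraphs. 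You instead double-count pairs $(\chi,\mathcal H)$ and bound the bad pairs directly, which requires the deterministic estimate that two balanced colourings inducing distinct partitions share at most $s(n)-\Omega(n^2)$ common rainbow triples; your identification of $|K_\chi\cap K_{\chi'}|$ with the permanent of the intersection matrix $X=(|A_i\cap B_j|)$, and the exhibition of a non-bijective $f$ with $\prod_i x_{if(i)}\ge (n/9-1)^2$, is a clean and complete way to get the needed quadratic gap (note that distinctness of the partitions forces some row of $X$ to have two positive entries precisely because balancedness rules out one part being contained in another). Both routes give the same constant $\frac16-o(1)$; yours is fully deterministic and makes the loss from multiply-partitioned hypergraphs explicit, while the paper's probabilistic argument is shorter but only implicitly quantifies the $o(1)$.
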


\begin{proof}
 For the upper bound in (\ref{T(n)}), observe that $3^n$ counts the number of $3$-partitions
of the vertices, and the exponent is the maximum number of
crossing edges that a $3$-partite 3-graph can have.

 For the lower
bound we count the number of (unordered) $3$-partitions where this
equality can be achieved. Each such $3$-partition gives rise to $2^{s(n)}$ $3$-partite $3$-graphs.
The number of such 3-partitions of $[n]$ is at least
$$\frac{1}{6} \ \binom{n}{\lfloor\frac{n+2}{3}\rfloor,
\lfloor\frac{n+1}{3}\rfloor, \lfloor\frac{n}{3}\rfloor}.$$
We argue next that most of the  3-partite 3-graphs obtained in this way are different.  More precisely, we show below that for any given 3-partition $P$ as above, most 3-partite 3-graphs with 3-partition $P$ have a unique 3-partition (which must be $P$).
Given a $3$-partition $U_1,U_2,U_3$ of $[n]$, if the crossing edges are added randomly, then Chernoff's inequality gives that almost all 3-graphs generated
satisfy the following two conditions:

(i) for all $ u\in U_i, v\in U_j$, where $\{i,j,\ell\}=\{1,2,3\}$  we have $|L_{U_\ell}(u,v)| > n/10$

(ii) for $\{i,j,\ell\}=\{1,2,3\}$ and  for every $ A_i \subset U_i, A_j\subset U_j$ with $|A_i|, |A_j|> n/10$ and $v\in U_\ell$,  the number of crossing edges
 intersecting each of $A_i,A_j$ and containing $v$ is at least $|A_1||A_2|/10$.

 If $\HH$ has $3$-partition $U_1,U_2,U_3$ of $[n]$, and it satisfies conditions (i) and (ii), then the $3$-partition is unique. Indeed, take $u,v$ lying in an edge, then $u$, $v$ and $L(u,v)$ are in different parts, where
$|L(u,v)|> n/10$, so for $w\in L(u,v)$, $L(u,w)$ is in the same part as $v$ and $L(v,w)$ is in the same part as  $u$.  Now by (ii) the rest of the vertices must lie in a unique part.

 To prove \eqref{tnn-2}  first note that if $a+b+c=n$, then ${n \choose a,b,c}$ is maximized for $a=\lfloor (n+2)/3\rfloor, b=\lfloor (n+1)/3\rfloor, c=\lfloor n/3\rfloor$. This implies that
 $$3^n=\sum_{a+b+c=n} {n \choose a,b,c} \le {n+2 \choose 2} \binom{n}{\lfloor\frac{n+2}{3}\rfloor,
\lfloor\frac{n+1}{3}\rfloor, \lfloor\frac{n}{3}\rfloor}<(0.6)n^2
\binom{n}{\lfloor\frac{n+2}{3}\rfloor,
\lfloor\frac{n+1}{3}\rfloor, \lfloor\frac{n}{3}\rfloor}
.$$
 Together with \eqref{T(n)} we obtain
 $$
\frac{T(n-2)}{T(n)} < \frac{3^{n-2} 2^{s(n-2)}}{(\frac{1}{6}-o(1)) \binom{n}{\lfloor\frac{n+2}{3}\rfloor,
\lfloor\frac{n+1}{3}\rfloor, \lfloor\frac{n}{3}\rfloor} 2^{s(n)}} < n^2 2^{s(n-2)-s(n)}.$$
It is easy to see that $s(n)-s(n-2)\ge 2n^2/9-n$, and the result follows.
\end{proof}

\subsection{Lower Density}

\begin{definition}\label{defdensity}
A vertex partition $U_1,U_2,U_3$ of a 3-graph $\F$ is {\it $\mu$-lower dense}
if each of the following conditions
 are satisfied:\\
(i) For every $i$ if $A_i\subset U_i$ with $|A_i|\ge \mu n$ then
$$|\{E\in\F: \ |E\cap A_i|=1, \text{ for } 1\le i\le 3\}\ >\ |A_1| \cdot |A_2| \cdot |A_3| \cdot 2^{-3}.$$
(ii) Let $\{i,j,\ell\}=\{1,2,3\}$, $A_i\subset U_i$ with $|A_i|\ge
\mu n$, $G\subset U_j\times U_\ell$ with
 $|G|\ge \mu^2 n^2.$
Then
$$|\{E\in\F: \ |E\cap A_i|=1,\ E-A_i\in G\}|\ >\ |A_i|\cdot |G| \cdot 2^{-3}.$$
(iii) Let $\{i,j,\ell\}=\{1,2,3\}$, $A_i\subset U_i$ and $A_j\subset
U_j$ with $|A_i|,|A_j|\ge \mu n$, and
 $G$ be a matching on
 $U_\ell$ with $|G|\ge \mu n.$
Set
$$\F_{A_i, A_j, G}=\{ \{C, D\} \in \F^2: C-U_\ell=D-U_\ell,\
 |C\cap A_i|=|C\cap A_j|=1,\{(C\cap U_\ell),( D\cap U_\ell)\}\in G\}.$$
Then $$|\F_{A_i, A_j, G}| \ge \frac{|A_i|\cdot |A_j|\cdot  |G|}{
2^{7}}.$$ (iv) For every $i$ we have $||U_i|-n/3|<\mu n$.
\end{definition}

For $\mu>0$ let $Forb(n,F_5,\eta,\mu)\subset Forb(n,F_5,\eta)$ be the
family of $\mu$-lower dense hypergraphs.

\begin{lemma}\label{density}
For every $\eta $ if $\mu^3\ge 10^3H(6\eta)$ then  for $n$ large enough
$$|Forb(n,F_5,\eta)-Forb(n,F_5,\eta,\mu)|\ < \ 2^{n^3(1/27-\mu^3/40)}.$$
\end{lemma}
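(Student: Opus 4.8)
The plan is to bound the number of $\HH\in Forb(n,F_5,\eta)\setminus Forb(n,F_5,\eta,\mu)$ by fixing, for each such $\HH$, an optimal $3$-partition $P=U_1\cup U_2\cup U_3$ of $\HH$ and observing that, because $\HH$ is not $\mu$-lower dense, $P$ must violate at least one of the conditions (i)--(iv) of Definition~\ref{defdensity}. In every case the enumeration has the same shape: first choose $P$ (at most $3^n=2^{O(n)}$ ways); then choose the non-crossing edges of $\HH$ with respect to $P$ — there are at most $\eta n^3$ of them, so at most $\sum_{i\le \eta n^3}\binom{n^3}{i}\le 2^{H(\eta)n^3}\le 2^{\mu^3 n^3/1000}$ possibilities, using $(\ref{x})$, the monotonicity of $H$, and $\mu^3\ge 10^3 H(6\eta)$; and finally choose the crossing edges of $\HH$, which lie among the $|U_1||U_2||U_3|\le n^3/27$ crossing triples of $P$. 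Allowing in each case one further $2^{O(n^2)}$ factor to name a ``witness'' to the failure, it therefore suffices to show that the crossing edges can be chosen in at most $2^{(1/27-\mu^3/5)n^3}$ ways (for $n$ large); summing over the at most four cases then yields $|Forb(n,F_5,\eta)\setminus Forb(n,F_5,\eta,\mu)|<2^{n^3(1/27-\mu^3/40)}$ for $n$ large.

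If (iv) fails, then $\bigl||U_i|-n/3\bigr|\ge\mu n$ for some $i$, and an elementary optimization of $xyz$ under $x+y+z=n$ gives $|U_1||U_2||U_3|\le(1/27-\mu^2/5)n^3$, so the crossing edges have at most $2^{(1/27-\mu^2/5)n^3}\le 2^{(1/27-\mu^3/5)n^3}$ choices. If (i) or (ii) fails, the failure exhibits a ``box'' $B$ of crossing triples — namely $A_1\times A_2\times A_3$ in case~(i), and $\{\{v\}\cup e:v\in A_i,\ e\in G\}$ in case~(ii) — with $|B|\ge\mu^3 n^3$, inside which $\HH$ has at most $|B|/8$ edges. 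Naming $B$ costs $2^{O(n^2)}$ (the graph $G\subseteq U_j\times U_\ell$ in case~(ii) has at most $2^{n^2}$ choices), one then picks the $\le|B|/8$ edges inside $B$ in at most $\sum_{i\le|B|/8}\binom{|B|}{i}\le 2^{H(1/8)|B|}$ ways and the remaining $|U_1||U_2||U_3|-|B|$ crossing triples freely, so the crossing edges have at most $2^{\,|U_1||U_2||U_3|-(1-H(1/8))|B|}\le 2^{(1/27-(1-H(1/8))\mu^3)n^3}$ choices; since $1-H(1/8)>0.4>1/5$ this is within budget.

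The delicate case is the failure of (iii): there are $A_i\subseteq U_i$, $A_j\subseteq U_j$ with $|A_i|,|A_j|\ge\mu n$ and a matching $G$ on $U_\ell$ with $|G|\ge\mu n$ such that $|\F_{A_i,A_j,G}|<|A_i||A_j||G|/2^7$. Put $M=|A_i||A_j|$ and, for each vertex $c$ covered by $G$, let $L_c=\{(a,b)\in A_i\times A_j:\{a,b,c\}\in\HH\}$; then the edges of $\HH$ inside the region $R=\{\{a,b,c\}:a\in A_i,\ b\in A_j,\ c\in V(G)\}$, a set of $|R|=2M|G|\ge 2\mu^3 n^3$ crossing triples, are exactly encoded by the tuple $(L_c)_{c\in V(G)}$, and the failure of (iii) says $\sum_{\{c,c'\}\in G}|L_c\cap L_{c'}|<M|G|/128$. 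Since a pair $(L_c,L_{c'})$ with $|L_c\cap L_{c'}|=s$ can be chosen in $\binom{M}{s}3^{M-s}$ ways (fix the intersection, then send each remaining element of $A_i\times A_j$ to ``$L_c$ only'', ``$L_{c'}$ only'', or ``neither''), after naming $A_i,A_j,G$ (cost $2^{O(n^2)}$) the number of admissible tuples is
\[
\sum_{(s_e)_{e\in G}\,:\ \sum_e s_e<M|G|/128}\ \prod_{e=\{c,c'\}\in G}\binom{M}{s_e}\,3^{\,M-s_e}\ \le\ 2^{\,(\log_2 3+H(1/128))M|G|+O(n\log n)},
\]
where we used that $s\mapsto\binom{M}{s}3^{-s}$ is log-concave, so the product over $e$ is maximized subject to $\sum_e s_e\le M|G|/128$ when all $s_e=M/128$, and that there are only $2^{O(n\log n)}$ such tuples $(s_e)$. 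As $\log_2 3+H(1/128)<1.7$, this is at most $2^{\,|R|-0.3\,M|G|+O(n\log n)}$; combining with the free choice of the remaining $|U_1||U_2||U_3|-|R|$ crossing triples bounds the number of choices of crossing edges by $2^{(1/27-0.3\mu^3)n^3+O(n\log n)}\le 2^{(1/27-\mu^3/5)n^3}$ for $n$ large.

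I expect case (iii) to be the main obstacle: the hypothesis $|\F_{A_i,A_j,G}|<|A_i||A_j||G|/2^7$ bounds a \emph{sum of pairwise intersections} of the sets $L_c$ rather than the number of edges in a single box, so extracting an exponential saving from it requires the log-concavity/entropy estimate above — and this is precisely where the constant $2^7$ in Definition~\ref{defdensity}(iii) is used, ensuring $1/128$ is small enough that $\log_2 3+H(1/128)<2$. The remaining points are routine: checking that the absolute constants $8$, $2^7$, $10^3$, and $40$ fit together as above, and that the $2^{O(n^2)}$ and $2^{O(n\log n)}$ enumeration factors, the non-crossing-edge factor $2^{\mu^3 n^3/1000}$, and the factor $4$ from the case split are absorbed once $n$ is large.
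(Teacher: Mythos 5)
Your proposal is correct, and its skeleton is the same as the paper's: choose the partition ($3^n$ ways), choose the at most $\eta n^3$ non-crossing edges ($2^{H(6\eta)n^3}$-type cost, absorbed by $\mu^3\ge 10^3H(6\eta)$), handle a failure of (iv) by the volume bound $|U_1||U_2||U_3|\le(1/27-\mu^2/5)n^3$, and for failures of (i)--(iii) take a union bound over the witnesses $A_i$ and $G$ and extract an exponential saving of order $2^{-c\mu^3n^3}$ in the number of admissible crossing-edge configurations. Where you differ is in how that saving is extracted: the paper views the crossing edges, for a fixed partition and fixed set of bad edges, as independent fair coin flips and quotes Chernoff's inequality (Theorem \ref{chernoff}), multiplying the resulting failure probability by the $2^{n^3/27}$ size of the sample space; you instead count configurations directly, via $\sum_{i\le|B|/8}\binom{|B|}{i}\le 2^{H(1/8)|B|}$ for (i) and (ii), and for (iii) via the explicit encoding of the edges in the region $R$ by the tuple $(L_c)_{c\in V(G)}$ together with the count $\binom{M}{s}3^{M-s}$ of pairs with prescribed intersection size and a log-concavity/Jensen step. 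The two are of course equivalent (a count of configurations is $2^{|R|}$ times a probability under the uniform measure), but your version is self-contained -- it needs no concentration inequality -- and it makes visible exactly where the matching structure of $G$ in condition (iii) provides the independence across the pairs $\{c,c'\}$, and why the constant $2^{7}$ there is what makes $\log_2 3+H(1/128)<2$; the paper's version is shorter once Chernoff is available. Your constant bookkeeping (savings of at least $\mu^3/5$ per case against a cost of $\mu^3/1000$ for the bad edges, comfortably beating the target $\mu^3/40$) checks out.
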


\begin{proof}
We wish to count the number of  $\HH \in Forb(n, F_5, \eta)-Forb(n,
F_5, \eta, \mu)$. The number of ways to choose a $3$-partition of
$\HH$ is at most $3^n$. Given a particular $3$-partition $P=(U_1,
U_2, U_3)$, the number of ways the at most $\eta n^3$ bad edges
could be placed is at most
$$\sum_{i\le \eta n^3} {{n \choose 3} \choose i}< 2^{H(6\eta){n \choose 3}}.$$
If $|U_i-n/3|>\mu n$ for some $i$, then the number of possible
crossing edges is at most
$$n^3(1/27-\mu^2/4+\mu^3/4)<n^3(1/27-\mu^2/5).$$ We conclude that the number of
$\HH \in Forb(n, F_5, \eta)-Forb(n, F_5, \eta, \mu)$ for which there exists a partition that
 fails property (iv) is at most
$$ f(n, \eta) 2^{n^3(1/27-\mu^2/5)},$$
where
$$f(n, \eta)=3^n \cdot 2^{H(6\eta){n \choose 3}}.$$
Since $\HH \not\in Forb(n, F_5, \eta, \mu)$ it fails to satisfy one of the four conditions in Definition \ref{defdensity}.
  For a fixed partition $P$ and choice of bad edges, we may view $\HH$ as a probability space where we choose
   each crossing edge with respect to $P$ independently with probability $1/2$.
 The total number of ways to choose the crossing edges is at most $2^{n^3/27}$ (an upper bound on the size of the
 probability space) so we obtain that $|Forb(n,F_5,\eta)-Forb(n,F_5,\eta,\mu)|$ is upper bounded by
 $$f(n, \eta) \cdot 2^{n^3/27} \cdot Prob(\HH \hbox{ fails (i) or (ii) or (iii)})+f(n, \eta) 2^{n^3(1/27-\mu^2/5)}.$$

 We will consider each of these probabilities separately and then use the union bound.  First however, note that
 the number of
choices for $A_i \subset U_i$ as in Definition \ref{defdensity} is at most $2^n$ and the number of ways $G$
could be chosen is at most $2^{n^2}$.

(i) Since  $|A_1||A_2||A_3| \ge \mu^3 n^3$, Chernoff's inequality
gives
$$Prob(\HH \hbox{ fails (i)}) \le 2^{3n} \cdot
\exp(-\mu^3 n^3/16).$$
(ii) Since  $|A_i||G| \ge \mu^3 n^3$, Chernoff's inequality gives
$$Prob(\HH \hbox{ fails (ii)}) \le 2^{n} \cdot 2^{n^2}\cdot \exp(-\mu^3 n^3/16).$$
(iii) Since $|A_i||A_j||G| \ge \mu^3 n^3$ and  both edges $C$ and $D$ must be present, we apply
Chernoff's inequality with $m=|A_i||A_j||G|/2$ and $p=1/4$. The number of matchings $G$ is at most $(n^2)^{n/2}=2^{n\log_2 n}$, so
$$Prob(\HH \hbox{ fails (iii)}) \le 2^{2n} \cdot 2^{n\log_2 n}\cdot \exp(-\mu^3 n^3/32).$$
The lemma now follows since $10^3H(6\eta)\le \mu^3,$ and $n$ is sufficiently large.
\end{proof}

\subsection{There is no  bad vertex}

Let $\HH \in Forb(n,F_5,\eta,\mu)$, assume $n$ is large enough, and
$U_1,U_2,U_3$ is an optimal
 partition of $\HH$, with $x\in U_1$.
For a vertex $y$ let $L_{i,j}(y)$ denote the set of edges of $\HH$
containing $y$, and additionally intersecting $U_i$ and $U_j$. In
particular, $L_{i,i}(y)$ is the set of edges of $\HH$ which contain
$y$, and their other vertices are in $U_i$.

The aim of this subsection is to prove the following lemma, which shows that the number of bad edges
containing a vertex is small.

\begin{lemma}\label{lowdegree}
  Each of the followings is satisfied for $x\in U_1$.\\
(i) $|L_{1,1}(x)|\ <\  2\mu n^2.$\\
(ii) $|L_{1,2}(x)|\ < \ 2\mu n^2.$\\
(iii) $|L_{2,2}(x)|\ < \ 2\mu n^2.$\\
(iv) $|L_{1,3}(x)|\ < \ 2\mu n^2.$\\
(v) $|L_{3,3}(x)|\ < \ 2\mu n^2.$
\end{lemma}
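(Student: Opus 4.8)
The plan is to prove all five bounds by the same contradiction scheme: assume the relevant set of bad edges at $x$ is large, and use this largeness together with the $\mu$-lower dense properties of $\HH$ to manufacture a copy of $F_5$, contradicting $F_5 \not\subset \HH$. Throughout we work with the optimal partition $U_1,U_2,U_3$ and $x \in U_1$, and we repeatedly use that $||U_i|-n/3|<\mu n$ (property (iv) of Definition~\ref{defdensity}), so each $|U_i|$ is close to $n/3$ and in particular $\Omega(n)$.

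\textbf{Part (i): $|L_{1,1}(x)|<2\mu n^2$.} Suppose not. The edges of $L_{1,1}(x)$ are of the form $xuv$ with $u,v\in U_1$, so they form a graph $G_x$ on $U_1$ with $|G_x|\ge 2\mu n^2\ge \mu n |U_1|$ roughly; by Lemma~\ref{matching} this graph contains a matching $G$ of size at least $|G_x|/(2|U_1|)\ge \mu n$ (adjusting constants). Now apply property (iii) of Definition~\ref{defdensity} with $\ell=1$, $i=2$, $j=3$, taking $A_2=U_2$, $A_3=U_3$ (both of size $\ge \mu n$), and the matching $G$ on $U_\ell=U_1$: we get at least $|U_2||U_3||G|/2^7 \ge \Omega(n^3)>0$ pairs $\{C,D\}\in\HH^2$ with $C-U_1=D-U_1$, each of $C,D$ meeting $A_2$ and $A_3$ in one point, and $\{C\cap U_1, D\cap U_1\}$ an edge of $G$. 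Fix one such pair: $C=\{a,b,p\}$, $D=\{a,b,q\}$ with $a\in U_2$, $b\in U_3$, $\{p,q\}\in G$, and $pq\in G_x$ means $xpq\in\HH$. Then $\{xpq,\ abp,\ abq\}$ is a copy of $F_5$ (with ``apex pair'' $ab$ and the shared pair... let me recheck: $F_5=\{123,124,345\}$; here $abp=\{a,b,p\}$ and $abq=\{a,b,q\}$ share the pair $ab$, and $xpq$ shares vertex $p$ with $abp$ and vertex $q$ with $abq$ and is disjoint from $ab$ — this is exactly $F_5$ with $\{3,4\}=\{a,b\}$, $5=x$, $1=p$, $2=q$). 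So $F_5\subset\HH$, contradiction. The main subtlety is bookkeeping the vertex-disjointness so that we genuinely get five distinct vertices; since $C,D$ meet $U_2$ and $U_3$ while $p,q,x\in U_1$, disjointness across parts is automatic, and $p\ne q$ because $\{p,q\}$ is a matching edge.

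\textbf{Parts (ii)--(v).} These are analogous but one must choose the right density axiom for each case. For (iii), $|L_{2,2}(x)|$: edges $xuv$ with $u,v\in U_2$; these form a graph on $U_2$, take a matching $G$ on $U_2$ of size $\ge \mu n$, apply (iii) with $\ell=2$, $A_1\subset U_1$ chosen of size $\ge\mu n$ (using that $x$ alone is not enough — but note we want copies of $F_5$ using $x$; actually we should put $x$ into the role of the lone vertex, so instead apply (iii) with $A_1=\{$large subset of $U_1\}$, $A_3=U_3$, matching on $U_2$, and then combine a resulting pair $\{C,D\}$ sharing their $U_1\cup U_3$ part with an edge $xuv$ of $\HH$, $uv\in G$) to build $F_5$. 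For the ``mixed'' cases (ii) $|L_{1,2}(x)|$ (edges $xuv$, $u\in U_1$, $v\in U_2$) and (iv) $|L_{1,3}(x)|$, and partly (v), one uses property (ii) of Definition~\ref{defdensity} instead: the set $L_{1,2}(x)$ is itself a subset of $U_1\times U_2$ of size $\ge 2\mu n^2 = \mu^2 n^2\cdot(2/\mu)\ge \mu^2 n^2$, so with $G=L_{1,2}(x)$ and $A_3=U_3$, property (ii) yields many edges $E\in\HH$ with $|E\cap U_3|=1$ and $E-U_3\in G$, i.e. $E=\{u,v,w\}$, $uv\in L_{1,2}(x)$ so $xuv\in\HH$, $w\in U_3$; then find a second such edge sharing a vertex appropriately, or iterate, to close up $F_5$. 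In each case the exact choice of which two ground edges from the density axiom to combine with $x$'s link requires a short case analysis on where the shared pair/vertex of $F_5$ sits.

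\textbf{Main obstacle.} The genuine difficulty is not any single estimate but organizing the five cases so that in each one the largeness of $L_{\cdot,\cdot}(x)$ feeds cleanly into one of the three density conditions (i)--(iii) of Definition~\ref{defdensity} and the output of that condition, together with one more edge through $x$, assembles into $F_5$ with all five vertices distinct and in the correct parts. I expect (ii) and (iv) — the cases where the two ``bad'' endpoints of edges through $x$ lie in different parts — to be the most delicate, since there the link of $x$ is a bipartite graph across two parts and one must be careful that the copy of $F_5$ produced does not accidentally collapse (e.g. that the vertex $w\in U_3$ supplied by the density axiom is not forced to coincide with an existing vertex); handling this cleanly may require first passing to a sub-link of $x$ that is itself ``spread out'' (e.g. has large minimum degree or contains a large matching after deleting low-degree vertices, via Lemma~\ref{matching}) before invoking the density property. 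Once the combinatorial configuration is pinned down, the inequalities $\mu^3 n^3 / 2^7 > 0$ and the counting of overlaps are immediate.
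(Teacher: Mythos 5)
Your treatment of cases (i), (iii) and (v) is essentially the paper's own proof: extract a matching of size at least $\mu n$ from the link of $x$ via Lemma~\ref{matching}, feed it into condition (iii) of Definition~\ref{defdensity} with the other two classes playing the roles of $A_i,A_j$, and read off the copy of $F_5$ from one of the resulting pairs $\{C,D\}$ together with the matching edge. This is correct as written (one small point: in case (iii) you must take $A_i=U_1\setminus\{x\}$ so that the common vertex $a$ of the two density-supplied edges is not $x$ itself).

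There is, however, a genuine gap in cases (ii) and (iv), which you correctly flag as delicate but do not resolve. Starting from $G=\{ab:\, xab\in L_{1,2}(x)\}\subset U_1\times U_2$ and applying condition (ii) of Definition~\ref{defdensity} with some $A_3\subset U_3$, you obtain an edge $abz\in\HH$ with $xab\in\HH$ and $z\in A_3$; these two edges share the pair $ab$, so by the structure of $F_5$ the third edge must be $\{x,z,w\}$ for a fifth vertex $w\notin\{a,b\}$. No condition of Definition~\ref{defdensity} can supply such an edge: all of those conditions require the distinguished vertex sets to have size at least $\mu n$, whereas here one of the distinguished ``sets'' is the single vertex $x$, so the lower-density of $\HH$ says nothing about the link of $x$ into $U_2\times U_3$. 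The missing ingredient is the \emph{optimality of the partition}: since $U_1,U_2,U_3$ minimizes the number of non-crossing edges, moving $x$ to $U_3$ cannot help, which forces $|L_{2,3}(x)|\ge |L_{1,2}(x)|\ge 2\mu n^2$. From this one extracts the set $A_3$ of those $z\in U_3$ lying in at least \emph{two} crossing edges through $x$ (two, so that afterwards one can choose $b'\ne b$ with $xb'z\in\HH$), checks $|A_3|\ge\mu n$, and only then applies condition (ii) of the definition; the triple $\{xab,\ abz,\ xb'z\}$ is then the desired $F_5$. Without invoking optimality, ``iterating'' or ``passing to a spread-out sub-link'' will not close the argument, because the obstruction is structural: the density hypotheses simply cannot see a single vertex.
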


\begin{proof}
(i) If $|L_{1,1}(x)|>2\mu n^2$  then by Lemma~\ref{matching} $
\{E-x:\ E\in L_{1,1}(x)\}$  contains a matching $G$ with size at
least $\mu n$. Then using Definition~\ref{defdensity} (iii) (with
$G, A_i=U_2, A_j=U_3$) for $\HH$, we find $y,z\in U_1, a\in U_2,
b\in U_3$ such that $xyz,yab,zab\in \HH$,
 yielding an $F_5\subset \HH$, a contradiction.

(ii) Suppose for contradiction that $|L_{1,2}(x)|\ge  \ 2\mu n^2$. By the optimality of the partition
 $|L_{1,2}(x)|\le
|L_{2,3}(x)|$, otherwise $x$ could be moved to $U_3$ to decrease the
number of bad edges.
 We shall  use  property (ii) in  Definition~\ref{defdensity}. We use it
with $G=\{E-x:\ E\in L_{1,2}(x)\}$ and
$$A_3=\{z\in U_3: \ \exists \hbox{ crossing edges } E_1,E_2\in \HH
\text{ with } \{x,z\}\subset E_1\cap E_2\}.$$
  Note that $|A_3|\ge
\mu n$ as $|L_{2,3}(x)|\ge 2\mu n^2$. Since $\HH$ is $\mu$-lower
dense, we find $abz \in \HH$ with $xab \in L_{1,2}(x)$ and $z \in
A_3$. By definition of $A_3$, there exists $b' \in U_2-\{b\}$
 such that $xb'z \in L_{2,3}(x)$. This gives us $abx, abz, xb'z\in \HH$,
forming an $F_5$.

(iii) Suppose for contradiction that $|L_{2,2}(x)|\ge   2\mu n^2$.
By Lemma~\ref{matching} $\{E-x:\ E\in L_{2,2}(x)\}$ contains a
matching $G$ with size at least $\mu n$. Then using
 Definition~\ref{defdensity} (iii)
(with $G, A_i=U_1-x, A_j=U_3$)
   we find $b,b'\in U_2,\  a\in U_1,\  c\in U_3$ such that $abc, ab'c, xbb'\in \HH$,
 forming an $F_5\subset \HH$, a contradiction.

The proof of (iv) is identical to (ii) and of (v) is to (iii).
\end{proof}

\subsection{Getting rid of bad edges - A Progressive Induction}

Here we have to do something similar to  the previous section,
however, as we get rid of only a few edges, the computation needed
is more delicate. We shall do progressive induction on the number of
vertices. The general idea is that we remove some vertices of a bad
edge, and count the number of ways it could
 have been joined to the rest of the hypergraph.

 We shall prove \eqref{induction} via induction on $n$. Fix an $n_0$
such that $1/n_0$ is much smaller than any of our constants, and all
of our prior lemmas and theorems are valid for every $n\ge n_0$. Let $C>10$ be
sufficiently large that \eqref{induction} is true for every $n\le
n_0$.

Let $Forb'(n, F_5, \eta, \mu)$ be the set of hypergraphs $\HH \in
Forb(n,F_5,\eta,\mu)$ having an optimal partition with a bad edge.
Our  final step is to give an  upper bound $|Forb'(n, F_5, \eta,
\mu)|$. There are two types of bad edges, one which is completely
inside of a class, and the one which intersects two classes.

Let the bad edge be $xyz$, and the optimal partition be $U_1,U_2,U_3$.  Without loss of generality assume that
$x,y\in U_1$.

In an $\HH \in Forb'(n,F_5,\eta,\mu)$, $x,y,z$ could be chosen at
most $n^3$ ways, the optimal partition of $\HH$ in at most $3^n$ ways and
the hypergraph $\HH-\{x,y\}$ in at most $|Forb(n-2,F_5)|$ ways. By Lemma \ref{lowdegree} each of $|L_{1,1}(x)|,\ |L_{1,1}(y)|,\
|L_{1,2}(x)|,\ |L_{1,2}(y)|,\ |L_{1,3}(x)|,\ |L_{1,3}(y)|,\
|L_{2,2}(x)|,$ $ |L_{2,2}(y)|,\ |L_{3,3}(x)|,\ |L_{3,3}(y)|$ is at most $2\mu
n^2$, therefore the number of ways the bad edges could be joined to
$x,y$ is at most
$$\left(\sum_{i\le 2\mu n^2}\binom{n^2/2}{i}\right)^{10}\le 2^{10 H(4\mu)n^2}.$$

The key point is that for any $(u,v)\in (U_2-z)\times (U_3-z)$,
we cannot have both $xuv,yuv\in \HH$ otherwise they form with $xyz$ a
copy of $F_5$. Together with Definition \ref{defdensity} part (iv),
we conclude that the number of ways to choose the crossing edges containing $x$ or $y$ is at most
$$3^{|U_2||U_3|}2^{2n}\le 3^{\frac{n^2}{9}+\mu n^2}.$$
Note that the $2^{2n}$ estimates the number of ways having edges containing $u,z$ or $vz$, as for these pairs we do not have any restriction.

Putting this together,
\begin{equation}
|Forb'(n,F_5,\eta,\mu)|\ \le\ n^3 3^n |Forb(n-2,F_5)|\cdot  2^{10
H(4\mu)n^2} 3^{\frac{n^2}{9}+\mu n^2}.\end{equation}
 By the induction hypothesis, this is at most
$$  n^3 3^n (1+2^{C(n-2)-\frac{2(n-2)^2}{45}})T(n-2)
2^{10H(4\mu)n^2}3^{\frac{n^2}{9}+\mu n^2}.$$
Using (\ref{tnn-2})  this is upper bounded by
  $$ n^5 3^n\left(1+2^{C(n-2)-\frac{2(n-2)^2}{45}}\right)
2^{(90H(4\mu)+ \log_2 3+9\mu-2+\frac9n)\frac{n^2}{9}}\cdot T(n).$$
As mentioned before, the crucial point in the expression above is that $\log_2 3-2<0$.
More precisely, since  $n>n_0$,  $\log_2 3<1.59$ and
$90H(4\mu)+9\mu<0.001$, we have
$$\left(90H(4\mu)+ \log_2 3+9\mu-2+\frac9n\right)\frac{n^2}{9}<-\frac{2n^2}{45}.$$
Consequently,
$$ |Forb'(n,F_5,\eta,\mu)|\ \le\ n^5 3^n\left(1+2^{C(n-2)-\frac{2(n-2)^2}{45}}\right)  \cdot  2^{-\frac{2n^2}{45}}T(n)< \frac{1}{10} 2^{Cn-\frac{2n^2}{45}}T(n).  $$

Now we can complete the proof of \eqref{induction} by upper bounding $|Forb(n,F_5)|$ as follows:
$$ |Forb(n,F_5)-Forb(n, F_5, \eta)| + |Forb(n,F_5,\eta)-Forb(n,F_5,\eta,\mu)| + |Forb'(n,F_5,\eta,\mu)|+T(n)$$
$$ \ < \  2^{(1-\nu)\frac{n^3}{27}} + 2^{n^3(\frac{1}{27}-\frac{\mu^3}{40})} +  \frac{1}{10}
2^{Cn-\frac{2n^2}{45}}T(n) + T(n)$$ $$ <
 (1+2^{Cn-\frac{2n^2}{45}})T(n),$$
where the last inequality holds due to $T(n)>2^{s(n)}>2^{\frac{n^3}{27}-O(n^2)}$.
This completes the proof of the theorem. \qed

\end{document}